\documentclass[11pt]{amsproc}
\usepackage{amsmath}
\usepackage{amsthm}
\usepackage{amssymb}
\usepackage[abbrev]{amsrefs}
\usepackage{mathtools}
\usepackage{mathrsfs}
\usepackage[most]{tcolorbox}
\usepackage{fullpage}
\usepackage{enumerate}
\usepackage[hypertexnames=false]{hyperref}
\usepackage{thmtools}
\usepackage{url}
\usepackage{float}
\usepackage{physics}

\usepackage[capitalize,nameinlink,noabbrev,nosort]{cleveref}
\hypersetup{
	colorlinks=true,       % false: boxed links; true: colored links
	linkcolor=red!40!black,         % color of internal links
	citecolor=red!40!black,        % color of links to bibliography
	filecolor=red!40!black,      % color of file links
	urlcolor=red!40!black,           % color of external links
}
\renewcommand{\PrintReviews}[1]{}

\newtheorem{theoremcounter}{Theorem Counter}[section]

\theoremstyle{remark}
\newtheorem{remark}[theoremcounter]{Remark}

\theoremstyle{definition}
\newtheorem{definition}[theoremcounter]{Definition}
\newtheorem{example}[theoremcounter]{Example}

\theoremstyle{plain}
\newtheorem{lemma}[theoremcounter]{Lemma}
\newtheorem{proposition}[theoremcounter]{Proposition}
\newtheorem{corollary}[theoremcounter]{Corollary}

\newtheorem{theorem}[theoremcounter]{Theorem}

\crefname{equation}{eq.}{eqs.}
\Crefname{equation}{Eq.}{Eqs.}
\numberwithin{equation}{section}

\DeclareMathOperator{\ser}{\vartheta}
\DeclareMathOperator{\ord}{\mathrm{ord}}
\DeclareMathOperator{\sgn}{\mathrm{sgn}}

\newcommand{\Gz}[1]{\Gamma_0(#1)}
\newcommand{\Fricke}[1]{\Gamma_0^*(#1)}

\newcommand{\wM}[2]{\mathcal{M}_{#1}^{!}(#2)}
\newcommand{\Z}[0]{\mathbb{Z}}
\newcommand{\SLZ}[0]{\mathrm{SL}_2(\Z)}
\newcommand{\F}[0]{\mathcal{F}}
\newcommand{\uH}[0]{\mathbb{H}}
\newcommand{\A}[0]{\mathcal{A}}
\newcommand{\Q}[0]{\mathbb{Q}}
\newcommand{\C}[0]{\mathbb{C}}

\begin{document}

%%%%%%%%%%%%%%%%%%%%%%%%%%
\title{The Serre Derivatives and Zeros of Modular Forms}
\author{Naoki Sugibayashi}
\address{Graduate School of Mathematics, Kyushu University, Motooka 744, Nishi-ku, Fukuoka 819-0395, Japan}
\email{sugibayashi.naoki.238@s.kyushu-u.ac.jp}

\subjclass[2020]{11F03, 11F11, 11F25.}
\keywords{Serre derivatives, zeros of modular forms, Fricke groups}
%%%%%%%%%%%%%%%%%%%%%%%%%%

%%%%%%%%%%%%%%%%%%%%%%%%%%
\maketitle
%%%%%%%%%%%%%%%%%%%%%%%%%%

%%%%%%%%%%%%%%%%%%%%%%%%%%
\begin{abstract}
Since the work of F. Rankin and Swinnerton-Dyer on the zeros of Eisenstein series, many results have been obtained concerning the zeros of modular forms.
In this paper, we study the zeros of Serre derivatives of modular forms.
In particular, we prove that if all the zeros of a weakly holomorphic modular form in the standard fundamental domain lie on the lower boundary, then the same property holds for its Serre derivative.
\end{abstract}
%%%%%%%%%%%%%%%%%%%%%%%%%%

\section{Introduction}
Let $\uH$ denote the upper half-plane. For an even integer $k\geq4$, the normalized Eisenstein series of weight $k$ for $\SLZ$ is a holomorphic function on $\uH$ defined by
\[
E_k(\tau):=1-\frac{2k}{B_k}\sum_{n=1}^{\infty}\left(\sum_{0<d|n}d^{k-1}\right)q^n\qquad(\tau\in\uH,\quad q:=e^{2\pi i\tau}),
\]
where $B_k$ is the $k$-th Bernoulli number. The series $E_k$ is a modular form of weight $k$ for $\SLZ$.
The study of the zeros of Eisenstein series $E_k$ began with the pioneering work of Wohlfahrt~\cite{Woh63} and R. Rankin~\cite{Ran69}.
A major breakthrough was then achieved by F.~Rankin and Swinnerton-Dyer~\cite{RSD70}. They showed that for any even integer $k\geq4$, all the zeros of the Eisenstein series $E_k$ in the standard fundamental domain lie on the lower boundary arc.

The method developed in \cite{RSD70} has since become a foundation for subsequent work on zeros of modular forms.
For example, R. Rankin~\cite{Ran82} generalized this method to certain Poincaré series.
Moreover, Asai, Kaneko, and Ninomiya \cite{AKN97} studied the zeros of a certain natural basis of weakly holomorphic modular forms of weight $0$.
In 1998, Kaneko and Zagier \cite{KZ98} studied the Atkin orthogonal polynomials and ``hypergeometric modular forms", in terms of their relation to the $j$-invariants of supersingular elliptic curves as well as the distribution of their zeros.
Furthermore, many other results are known concerning the zeros of modular forms for $\SLZ$~\cites{Get04,Rud05,Gun06,DJ08,RVY17,Kla19,XZ21,Han23,XZ25,Rav25}.
In addition, there has been considerable research in which the group $\SLZ$ is replaced by Fricke groups~\cites{MNS07,Shi07,BKM08,Shi10,CI16,HK21,Kug21,Kug22}.
In another direction, Aricheta, Celeste, and Nemenzo~\cite{ACN16} studied the behavior of the zeros of modular forms under Hecke operators.

On the other hand, it is known that the ordinary differential operator $D:=\frac{1}{2\pi i}\frac{d}{d\tau}$ does not preserve modularity.
There are, however, two standard ways to compensate for this failure.
The first is to add a non-holomorphic correction term involving $\Im(\tau)$.
The second is to use the quasi-modular Eisenstein series of weight $2$
\[
E_2(\tau):=1-24\sum_{n=1}^{\infty}\left(\sum_{0<d|n}d\right)q^n,
\]
which leads to the (Ramanujan--)Serre derivative of a weight $k$ modular form $f$
\[
\ser_k(f):=D(f)-\frac{k}{12}E_2f.
\]

In this paper, we study the zeros of the Serre derivative $\ser_k(f)$.
In particular, we prove that if all the zeros of a weakly holomorphic modular form $f$ in the standard fundamental domain lie on the lower boundary, then the same is true for the Serre derivative $\ser_k(f)$. 
To the best of our knowledge, this is the first result on the behavior of the zeros of modular forms under a differential operator that preserves modularity.

\subsection{Modular Forms for Fricke Groups}
Let $p$ be either $1$ or a prime number.
The Fricke group of level $p$ is defined by
\[
\Fricke{p}:=\Gz{p}\cup\Gz{p}\begin{psmallmatrix}0&-\frac{1}{\sqrt{p}}\\\sqrt{p}&0\end{psmallmatrix},
\]
where $\Gz{p}:=\{\begin{psmallmatrix}a&b\\c&d\end{psmallmatrix}\in\SLZ\mid c\equiv0\pmod{p}\}$ is the Hecke congruence subgroup of level $p$.
Then the Fricke group $\Fricke{p}$ is a Fuchsian group of the first kind and commensurable with $\SLZ$.

We recall the definition of weakly holomorphic modular forms for the Fricke groups. 
\begin{definition}
    A weakly holomorphic modular form of weight $k\in\Z$ for $\Fricke{p}$ is a holomorphic function $f$ on $\uH$ satisfying the following conditions:
\begin{enumerate}
\item For any $\begin{psmallmatrix}a&b\\c&d\end{psmallmatrix}\in\Fricke{p}$, $f$ satisfies
\[
f\left(\frac{a\tau+b}{c\tau+d}\right)=(c\tau+d)^kf(\tau).
\]
\item There exists $n_0\in\Z$ such that the Fourier series expansion of $f$
\[
f(\tau)=\sum_{n\in\Z}a_f(n)q^n\qquad(a_f(n)\in\C)
\]
satisfies $a_f(n)=0$ for all integers $n<n_0$. 
\end{enumerate}
We denote the space of such forms by $\wM{k}{\Fricke{p}}$.
\end{definition}

\subsection{Fundamental Domain}
Let $\F_p$ denote the standard fundamental domain of $\Fricke{p}$ and let $\A_p$ denote the lower boundary of $\F_p$. 

For $p\in\{1,2,3\}$, the standard fundamental domain of $\Fricke{p}$ is given by
\[
\begin{split}
     \F_p:=&\left\{\tau\in\uH~\middle|~\abs{\tau}\geq\frac{1}{\sqrt{p}},-\frac{1}{2}\leq\Re(\tau)\leq0\right\}\\
     &\quad\cup\left\{\tau\in\uH~\middle|~\abs{\tau}>\frac{1}{\sqrt{p}},0<\Re(\tau)<\frac{1}{2}\right\},
\end{split}
\]
and the lower boundary of $\F_p$ is given by
\[
   \A_p:=\left\{\tau\in\F_p~\middle|~\abs{\tau}=\frac{1}{\sqrt{p}},-\frac{1}{2}\leq\Re(\tau)\leq0\right\}.
\]
For $p\in\{5,7\}$, the standard fundamental domain of $\Fricke{p}$ is given by
\[
\begin{split}
\F_p:=&\left\{\tau\in\uH~\middle|~\abs{\tau}\geq\frac{1}{\sqrt{p}},\abs{\tau+\frac{1}{2}}\geq\frac{1}{2\sqrt{p}},-\frac{1}{2}\leq\Re(\tau)\leq0\right\}\\
&\cup\left\{\tau\in\uH~\middle|~\abs{\tau}>\frac{1}{\sqrt{p}},\abs{\tau-\frac{1}{2}}>\frac{1}{2\sqrt{p}},0<\Re(\tau)<\frac{1}{2}\right\},
\end{split}
\]
and the lower boundary of $\F_p$ is given by
\[
\A_p:=\A_{p,1}\cup\A_{p,2},
\]
where 
\[
\begin{split}
\A_{p,1}&:=\left\{\tau\in\F_p~\middle|~\abs{\tau}=\frac{1}{\sqrt{p}},-\frac{1}{2}\leq\Re(\tau)\leq0\right\},\\
\A_{p,2}&:=\left\{\tau\in\F_p~\middle|~\abs{\tau+\frac{1}{2}}=\frac{1}{2\sqrt{p}},-\frac{1}{2}\leq\Re(\tau)\leq0\right\}.
\end{split}
\]

\subsection{Serre Derivatives and Main Theorem}
Let $k$ be an even integer. For a holomorphic function $f$ on $\mathbb{H}$, the weight $k$ Serre derivative of $f$ is given by
    \[
    \ser_{k}(f)=\ser_{k,p}(f):=\left(D-\frac{p+1}{24}kE_{2,p}\right)f,
    \]
where
\[
E_{2,p}(\tau):=\frac{1}{p+1}\left(pE_2(p\tau)+E_2(\tau)\right)
\]
is the quasi-modular Eisenstein series of weight $2$ for $\Fricke{p}$.
The Serre derivative preserves modularity, as stated in the following proposition.
\begin{proposition}[see Zagier~\cite{Zag08}]\label{prop:ser}
For any weakly holomorphic modular form $f\in\wM{k}{\Fricke{p}}$, we have $\ser_k(f)\in\wM{k+2}{\Fricke{p}}$.
\end{proposition}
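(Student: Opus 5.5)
The plan is to reduce everything to the transformation law of $E_{2,p}$ under the two kinds of generators of $\Fricke{p}$: the elements of $\Gz{p}$ and the Fricke involution $W_p=\begin{psmallmatrix}0&-1/\sqrt{p}\\\sqrt{p}&0\end{psmallmatrix}$. The starting point is the classical formula
\[
E_2\!\left(\frac{a\tau+b}{c\tau+d}\right)=(c\tau+d)^2E_2(\tau)+\frac{6c}{\pi i}(c\tau+d)\qquad\left(\begin{psmallmatrix}a&b\\c&d\end{psmallmatrix}\in\SLZ\right).
\]
From it I will show that for every $\gamma=\begin{psmallmatrix}a&b\\c&d\end{psmallmatrix}\in\Fricke{p}$
\[
E_{2,p}(\gamma\tau)=(c\tau+d)^2E_{2,p}(\tau)+\frac{12}{(p+1)\pi i}\,c\,(c\tau+d). \tag{$\ast$}
\]

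To prove $(\ast)$ for $\gamma\in\Gz{p}$, write $p\gamma\tau=\gamma'(p\tau)$ with $\gamma'=\begin{psmallmatrix}a&pb\\c/p&d\end{psmallmatrix}\in\SLZ$. The anomaly for $pE_2(p\tau)$ is then $p\cdot\frac{6(c/p)}{\pi i}(c\tau+d)=\frac{6c}{\pi i}(c\tau+d)$, which is the same as the anomaly of $E_2(\tau)$. Summing the two and dividing by $p+1$ gives $(\ast)$.

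For $W_p$, where $W_p\tau=-1/(p\tau)$ and $c\tau+d=\sqrt{p}\,\tau$, I use $S=\begin{psmallmatrix}0&-1\\1&0\end{psmallmatrix}$ twice:
\[
E_2(-1/(p\tau))=p^2\tau^2E_2(p\tau)+\frac{6p\tau}{\pi i},\qquad pE_2(-1/\tau)=p\tau^2E_2(\tau)+\frac{6p\tau}{\pi i}.
\]
Adding these gives $p\tau^2(pE_2(p\tau)+E_2(\tau))+\frac{12p\tau}{\pi i}$. Dividing by $p+1$ yields $(\ast)$ with $c(c\tau+d)=p\tau$. Since both sides of $(\ast)$ define a cocycle-type relation that is compatible with composition, $(\ast)$ then extends to the whole group $\Fricke{p}$. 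Alternatively, one can check directly the coset $\Gz{p}W_p$ by composing the two cases.

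Next I differentiate $f(\gamma\tau)=(c\tau+d)^kf(\tau)$. Using $\frac{d}{d\tau}\gamma\tau=(c\tau+d)^{-2}$, this gives
\[
(Df)(\gamma\tau)=(c\tau+d)^{k+2}Df(\tau)+\frac{kc}{2\pi i}(c\tau+d)^{k+1}f(\tau).
\]
The anomaly coming from $\frac{(p+1)k}{24}E_{2,p}f$ is, by $(\ast)$, $\frac{(p+1)k}{24}\cdot\frac{12c}{(p+1)\pi i}(c\tau+d)^{k+1}f=\frac{kc}{2\pi i}(c\tau+d)^{k+1}f$. The two anomalies cancel exactly, so $\ser_k(f)$ transforms with weight $k+2$.

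Holomorphy on $\uH$ is clear. The condition at the cusp follows because $D$ maps $\sum a_f(n)q^n$ to $\sum n a_f(n)q^n$, while $E_{2,p}$ is a power series in $q$. Hence the Fourier coefficients of $\ser_k(f)$ still vanish for $n<n_0$.

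The only delicate step is the Fricke involution computation. It hinges on getting the normalization $1/(p+1)$ and the factor $\sqrt{p}$ right, so that the two anomaly terms add up to exactly $\frac{12}{(p+1)\pi i}c(c\tau+d)$. I will verify this carefully, since the whole cancellation depends on the constant $\frac{p+1}{24}$ in the definition of $\ser_k$.
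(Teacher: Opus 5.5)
Your proof is correct and follows essentially the standard route. The paper gives no proof of its own; it only cites Zagier, whose argument for $\mathrm{SL}_2(\mathbb{Z})$ is exactly this cancellation between the anomaly of $Df$ and the anomaly of $E_2$. Your verification of $(\ast)$ on $\Gamma_0(p)$ and on $W_p$, with the constant $\frac{12}{(p+1)\pi i}$ matched to $\frac{p+1}{24}k$, is precisely what carries that argument over to $\Gamma_0^*(p)$. The extension to the coset $\Gamma_0(p)W_p$ via the cocycle relation $j(\gamma_1\gamma_2,\tau)=j(\gamma_1,\gamma_2\tau)\,j(\gamma_2,\tau)$ goes through as you claim.
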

The following is our main theorem.
\begin{theorem}\label{thm:main}
Let $p\in\{1,2,3,5,7\}$, let $k$ be an even integer, and let $f\in\wM{k}{\Fricke{p}}$ be a nonzero weakly holomorphic modular form with real Fourier coefficients.
Assume that $f$ has at least one zero in $\uH$ and that all the zeros of $f$ in the standard fundamental domain $\F_p$ lie on the lower boundary $\A_p$.
Then all the zeros of $\ser_{k}(f)$ in $\F_p$ also lie on $\A_p$. 
\end{theorem}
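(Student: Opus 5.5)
Since the paper gives no proof yet, my plan is to reduce the problem to a counting argument via the valence formula for $\Fricke{p}$. The zeros of $f$ lie on the arc, so $f$ restricted to $\A_p$, suitably normalized, is a real-valued function. Counting its sign changes and the sign changes of a corresponding real function attached to $\ser_k(f)$, I want to find enough zeros of $\ser_k(f)$ on $\A_p$ to exhaust the valence formula.

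The key tool is a parametrization of the arc. For $p\in\{1,2,3\}$, write $\tau=e^{i\theta}/\sqrt{p}$ with $\theta\in[\pi/2,\theta_p]$, where $\theta_p$ is the angle of the corner point. Invariance under the Fricke involution $W_p$, together with $\tau\mapsto-\bar\tau$ and the reality of the coefficients, shows that
\[
F(\theta):=e^{ik\theta/2}f\!\left(e^{i\theta}/\sqrt p\right)
\]
is real, up to a fixed constant factor $p^{k/4}$-type normalization. For $p\in\{5,7\}$ I do the same on each of the two arcs $\A_{p,1}$ and $\A_{p,2}$. On $\A_{p,2}$ the relevant involution is the Atkin--Lehner-type element fixing that circle, and I verify that it lies in $\Fricke{p}$ modulo translations.

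Next I differentiate $F$ in $\theta$. Since $d\tau/d\theta=i\tau$, we get
\[
F'(\theta)=e^{ik\theta/2}\left(-2\pi\tau f'(\tau)\cdot\tfrac{1}{2\pi i}\cdot(-i)\cdots\right),
\]
which gives, up to a nonzero constant,
\[
F'(\theta)\propto e^{ik\theta/2}\left(2\pi\tau\,Df+\tfrac{k}{2}f\right)\cdot i^{\ast}.
\]
The key identity I aim for is that the real-normalized Serre derivative $G(\theta):=e^{i(k+2)\theta/2}\ser_k(f)(e^{i\theta}/\sqrt p)$ satisfies
\[
G(\theta)=c\,F'(\theta)+h(\theta)F(\theta),
\]
with $c$ a nonzero real constant and $h$ a real function. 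This uses the transformation law of $E_{2,p}$ under $W_p$, namely $E_{2,p}(-1/(p\tau))=p\tau^2E_{2,p}(\tau)+\frac{12\,p\tau}{\pi i (p+1)}$ or similar. The extra term exactly cancels against the $\frac k2 f$ term, and the remainder, $\tau^{-1}$ times $E_{2,p}$ minus $\frac{3}{\pi\Im\tau}$, is real on the arc.

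Equivalently, I will show that $\ser_k(f)$ equals the derivative along the arc plus a real multiple of $f$. Then at consecutive zeros $\theta_1<\theta_2$ of $F$, the values $G(\theta_j)=cF'(\theta_j)$ have opposite signs when the zeros are simple. So $G$ has a zero strictly between consecutive zeros of $F$, a Rolle-type argument. Multiple zeros of $f$ of order $m$ give zeros of $\ser_k(f)$ of order $m-1$ at the same point.

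The counting step is then as follows. Let the zeros of $f$ on $\A_p$ be distinct points with multiplicities $m_j$. Rolle gives at least $\sum_j(m_j-1)+(\#\text{distinct}-1)$ zeros of $\ser_k(f)$ on $\A_p$, and possibly more near the endpoints, i.e. the elliptic points $i/\sqrt p$, the corner, and the junction point of $\A_{p,1}$ and $\A_{p,2}$. Comparing with the valence formula for weight $k+2$, note that the order at $\infty$ is unchanged when $a_f(n_0)\neq0$ unless $k\,\frac{p+1}{24}=n_0$. That degenerate case must be handled: there $\ord_\infty$ increases, and counting should still close. Since $\ser_k(f)$ has weight higher by $2$, the valence formula allots an extra $\frac{2(p+1)}{24}\cdot$(index factor) zeros. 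I must account for these via the forced zeros at elliptic points and via sign analysis at the endpoints of the arc, using $F$'s behavior and the symmetry of $F$ about the fixed points of the involutions.

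The main obstacle is this endpoint bookkeeping. I need to show that $G$ has an additional sign change between the last zero of $F$ and an endpoint, or a forced zero at an elliptic point, depending on $k$ modulo the relevant order. I would first try using parity: $F$ is even or odd about the elliptic point $\theta=\pi/2$ according to $k$, hence $F'$ has the opposite parity. Combined with the hypothesis that $f$ has at least one zero, this yields the extra zero.
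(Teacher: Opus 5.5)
Your set-up matches the paper's. You use the real-valued function $F_k(f;\theta)=e^{ik\theta/2}f(e^{i\theta}/\sqrt p)$. You write $F_{k+2}(\ser_k(f);\theta)$ as $-\frac{\sqrt p}{2\pi}F_k'(f;\theta)$ plus a multiple of $F_k(f;\theta)$. You interlace the zeros; at multiple zeros this works once you divide by $\prod_j(\theta-\theta_j)^{m_j-1}$, as in \Cref{lem:key}.

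The gap is the counting step, which decides the theorem. You leave it as the ``main obstacle'', and your plan (parity about $\theta=\pi/2$, extra sign changes near the endpoints) does not close it. Three ingredients are missing.

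\textbf{Forced zeros at elliptic points.} At every elliptic point $\tau_0$ with $f(\tau_0)\neq0$, \Cref{prop:ell} forces $k\equiv0\pmod{2e_{\tau_0}}$. Since $\ser_k(f)$ has weight $k+2$, this gives $\ord_{\tau_0}(\ser_k(f))\geq e_{\tau_0}-1$. You need this at the corner $\rho_p$, where $e_{\rho_p}=3,4,6$, and your reflection parity about $\pi/2$ only detects the order-$2$ point $i/\sqrt p$.

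\textbf{Weights.} Zeros must be counted with weight $1/e_\tau$. If $\tau_1,\dots,\tau_N$ are the distinct zeros of $f$ on $\A_p$, then $\tau_j$ contributes $(\ord_{\tau_j}(f)-1)/e_{\tau_j}$, not $\ord_{\tau_j}(f)-1$.

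\textbf{Riemann--Hurwitz.} This is the decisive identity. For genus zero, $\sum_{\tau\text{ elliptic}}(1-1/e_\tau)=\frac{p+1}{12}+1$. Combined with the valence formula for $f$ it gives
\[
\sum_{j=1}^{N}\frac{\ord_{\tau_j}(f)-1}{e_{\tau_j}}+(N-1)+\sum_{\substack{\tau\text{ elliptic}\\ f(\tau)\neq0}}\Bigl(1-\frac{1}{e_\tau}\Bigr)=\frac{p+1}{24}(k+2)-\ord_{i\infty}(f).
\]
So the interlacing zeros and the forced elliptic zeros already exhaust the weight-$(k+2)$ valence formula. For $p=1,2,3$ there is no further sign change to look for. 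The degenerate case you flag is also vacuous. A zero of $f$ in $\uH$ forces $\ord_{i\infty}(f)<\frac{p+1}{24}k$, so the leading coefficient $(n_0-\frac{p+1}{24}k)a_f(n_0)$ of $\ser_k(f)$ is nonzero and $\ord_{i\infty}(\ser_k(f))=\ord_{i\infty}(f)$.

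For $p=5,7$ there is a second, genuine gap. The arcs $\A_{p,1}$ and $\A_{p,2}$ carry different real-analytic functions $F_k^{p,1}$ and $F_k^{p,2}$. At the junction $\rho_{p,1}$ (parameter $\theta=\alpha_p$) their values differ by the factor $e^{ik(\alpha_p-\beta_p)/2}$, so interlacing cannot be run across $\rho_{p,1}$. When $f$ has zeros on both arcs, you get only $(M-1)+(N-1)$ interlacing zeros, and the count above comes out exactly one short.

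The paper closes this in two steps. First, the valence formula leaves room for at most one zero of $\ser_k(f)$ away from the located zeros and the elliptic points, and that zero is simple. Second, it produces an extra zero of $F_{k+2}(\ser_k(f);\theta)$ in $(\theta_M,\alpha_p)\cup(\alpha_p,\theta_{M+1})$, where $\theta_M$ is the last zero of $f$ on $\A_{p,1}$ and $\theta_{M+1}$ the first on $\A_{p,2}$:
\begin{enumerate}
\item If $f(\rho_{p,1})=0$, it interlaces on the second arc starting at $\alpha_p$.
\item Otherwise, $\ord_{\rho_{p,1}}(\ser_k(f))=e_{\rho_{p,1}}-1$ exactly. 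It then compares the one-sided signs of the first ($p=5$) or second ($p=7$) derivative at $\alpha_p$ with the signs at $\theta_M$ and $\theta_{M+1}$.
\end{enumerate}
Your remark that the junction may carry ``possibly more'' zeros does not supply this argument, and without it the cases $p=5,7$ remain unproved.
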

For each integer $n\geq1$, let $\ser_k^{(n)} $ denote $\ser_{k+2(n-1)}\circ\ser_{k+2(n-2)}\circ\cdots\circ\ser_k$.
As a consequence of \Cref{thm:main}, we obtain the following corollary.
\begin{corollary}\label{maincor}
Under the same assumptions as in \Cref{thm:main},  for every integer $n\geq1$, all the zeros of $\ser_k^{(n)}(f)$ in $\F_p$ lie on $\A_p$.
\end{corollary}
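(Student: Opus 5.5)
The corollary follows from \Cref{thm:main} by induction on $n$. At each step we check that the hypotheses of \Cref{thm:main} are preserved, with $k$ replaced by $k+2(n-1)$ and $f$ replaced by $\ser_k^{(n-1)}(f)$ (with the convention $\ser_k^{(0)}(f)=f$). The base case $n=1$ is exactly \Cref{thm:main}.

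For the inductive step, suppose $g:=\ser_k^{(n-1)}(f)$ is a nonzero element of $\wM{k+2(n-1)}{\Fricke{p}}$ with real Fourier coefficients. Suppose also that $g$ has at least one zero in $\uH$ and that all its zeros in $\F_p$ lie on $\A_p$. Then \Cref{thm:main} gives that all zeros of $\ser_{k+2(n-1)}(g)=\ser_k^{(n)}(f)$ in $\F_p$ lie on $\A_p$. To continue the induction we must verify the following for $h:=\ser_{k+2(n-1)}(g)$.
\begin{enumerate}[(i)]
\item $h$ is weakly holomorphic of weight $k+2n$. This is \Cref{prop:ser}.
\item $h$ has real Fourier coefficients. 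This holds because $D$ multiplies the $n$-th coefficient by $n$, and $E_{2,p}$ has rational coefficients.
\item $h\neq0$.
\item $h$ has at least one zero in $\uH$.
\end{enumerate}

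Item (iii) is where we use the Fourier expansion. Write $g=a_g(n_0)q^{n_0}+\cdots$ with $a_g(n_0)\neq0$. The leading coefficient of $h$ at $q^{n_0}$ is $\bigl(n_0-\tfrac{(p+1)(k+2n-2)}{24}\bigr)a_g(n_0)$. If this vanishes, then $h$ starts at a higher power of $q$, but $h=0$ would still need ruling out. Suppose $h=0$. Then $Dg=\tfrac{(p+1)w}{24}E_{2,p}\,g$ with $w=k+2n-2$, so $g$ is a constant multiple of $\eta_p^{w}$, where $\eta_p$ is the appropriate eta quotient satisfying $D\log\eta_p=\tfrac{p+1}{24}E_{2,p}$ (for example $\eta(\tau)\eta(p\tau)$ up to normalization). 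Such a function has no zeros in $\uH$, contradicting the inductive hypothesis that $g$ has a zero in $\uH$.

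Item (iv) is the main obstacle. It should follow from the valence formula for $\Fricke{p}$, which computes the total order of zeros of $h$ in $\F_p$, including the cusp, as a fixed multiple of the weight $k+2n$. I would try first to count orders directly. Let $\tau_0$ be a zero of $g$ of order $m\geq1$. If $m\geq2$, then $h=Dg-(\cdot)E_{2,p}g$ vanishes at $\tau_0$ to order $m-1\geq1$, and we are done. If every zero of $g$ is simple, compare orders at $\infty$ instead. The order of $h$ at the cusp is at least $n_0=\ord_\infty g$, while its weight is larger by $2$. The valence formula then forces $h$ to have total zero count $\tfrac{(p+1)(k+2n)}{24}-\ord_\infty h$, or the corresponding elliptic-point-weighted version. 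Comparing this with $g$'s count, which is positive and at most $\tfrac{(p+1)w}{24}-n_0$, the count for $h$ is at least $g$'s count plus $\tfrac{p+1}{12}$ minus the jump in cusp order. If the order at the cusp does not jump, the count is positive, so $h$ has a zero in $\uH$.

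If the order does jump (the leading coefficient vanished), then $n_0=\tfrac{(p+1)w}{24}$. In that case $g$'s zero count is $\tfrac{(p+1)w}{24}-n_0=0$, which would mean $g$ has no zeros in $\uH$, contradicting the hypothesis. Hence the jump never occurs in the inductive setting. With elliptic point contributions handled by the same bookkeeping, $h$ has a zero in $\uH$, and the induction closes.
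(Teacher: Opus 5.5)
Your proof is correct and follows the same route as the paper: you induct by applying \Cref{thm:main} repeatedly, using \Cref{prop:ser}, preservation of real coefficients, and the valence formula to see that the weighted zero count of $\ser_k(f)$ is $\frac{p+1}{12}+\bigl(\frac{p+1}{24}k-\ord_{i\infty}(f)\bigr)>0$. The differences are cosmetic: your observation in (iv) that the leading coefficient cannot vanish is exactly \Cref{rem:cusp} and already gives $h\neq0$, so the eta-quotient argument in (iii) and the case split on zero multiplicities are unnecessary, though both are valid.
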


This paper is organized as follows. In \Cref{sec:basic}, we recall the valence formula and the Riemann--Hurwitz formula for the Fricke groups.
In \Cref{sec:proof}, we prove \Cref{thm:main} for $p\in\{1,2,3\}$ by analyzing the sign changes of a real-valued and real-analytic function associated with $\ser_k(f)$.
This proof method is inspired by several works on the zeros of quasi-modular forms~\cites{GO22,vIR24,Sug26}.
In \Cref{sec:proof2}, we prove \Cref{thm:main} for $p\in\{5,7\}$.
The cases $p=5,7$ require additional care, since the lower boundary $\A_p$ has two arcs meeting at an elliptic point.
We handle this by carefully analyzing the sign changes of an auxiliary real-valued function near the elliptic point.
In \Cref{sec:proof3}, we prove \Cref{maincor}.
Finally, in \Cref{sec:alt}, we give an alternative proof of our main theorem in the case $p=1$, due to T. Nakaya. The alternative proof proceeds by considering a polynomial associated with a modular form and deriving a contradiction.

\section{Valence Formula and Riemann--Hurwitz Formula}\label{sec:basic}
From this section onward, unless otherwise noted, $p$ denotes either $1$ or a prime number, and $k$ denotes an even integer. We recall the valence formula and the Riemann--Hurwitz formula for the Fricke group $\Fricke{p}$. 

\subsection{Valence Formula}
For a nonzero $f\in\wM{k}{\Fricke{p}}$, $\ord_{\tau}(f)$ denotes the order of $f$ at $\tau\in\uH$. We also define the order of $f$ at the cusp $i\infty$ by
\[
\ord_{i\infty}(f):=\min\{n\in\Z\mid a_f(n)\neq0\}.
\]
The following theorem is known as \textit{the valence formula}. 
\begin{theorem}[Choi--Kim~\cite{CK14}]\label{thm:vf}
For any nonzero $f\in\wM{k}{\Fricke{p}}$, we have
\[
\sum_{\tau\in\F_p}\frac{\ord_{\tau}(f)}{e_{\tau}}+\ord_{i\infty}(f)=\frac{p+1}{24}k,
\]
where $e_{\tau}=e_{\tau,p}$ denotes half of the order of the stabilizer of $\Fricke{p}$ at $\tau\in\F_p$.
\end{theorem}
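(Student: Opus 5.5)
The plan is to deduce the formula from the classical valence formula for the Hecke congruence subgroup $\Gz{p}$, using that $\Fricke{p}$ contains $\Gz{p}$ with index $2$ (modulo $\pm1$). For $p=1$ we have $\Fricke{1}=\SLZ$ and the statement is the usual valence formula $\sum\ord_\tau(f)/e_\tau+\ord_{i\infty}(f)=k/12$. That formula is proved by integrating $f'/f$ around the boundary of the standard fundamental domain truncated at height $T$. The side edges cancel by translation invariance, and the arc pieces cancel by $\tau\mapsto-1/\tau$ up to the contribution of $k\log\tau$. Small circles around $i$ and $\rho$ produce the factors $1/e_\tau$, and the top edge gives $-\ord_{i\infty}(f)$. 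Since $f$ is meromorphic at the cusp, this works verbatim for weakly holomorphic $f$, and the same argument gives the general formula for any finite-index subgroup $\Gamma\subset\SLZ$:
\[
\sum_{\tau\in\Gamma\backslash\uH}\frac{\ord_\tau(f)}{e_{\tau,\Gamma}}+\sum_{\text{cusps }s}\ord_s(f)=\frac{k}{12}[\SLZ:\pm\Gamma].
\]
Here $\ord_s$ is measured in the local uniformizer at $s$. Applied to $\Gamma=\Gz{p}$, whose index is $p+1$, the right-hand side is $k(p+1)/12$.

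Next, view $f\in\wM{k}{\Fricke{p}}$ as an element of $\wM{k}{\Gz{p}}$. Let $W_p=\begin{psmallmatrix}0&-1/\sqrt p\\ \sqrt p&0\end{psmallmatrix}$; by hypothesis $f|_kW_p=f$. A fundamental domain for $\Gz{p}$ is $\F_p\cup W_p\F_p$, up to boundary identifications. I then compare the two sides term by term.

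\emph{Cusps.} $\Gz{p}$ has two cusps, $\infty$ and $0$, and $W_p$ swaps them. The order of $f$ at $0$, measured in the uniformizer there (width $p$), is the order at $\infty$ of $f|_kW_p=f$. So the cusp contribution is $2\ord_{i\infty}(f)$.

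\emph{Interior points.} Let $\tau\in\uH$ be a point not fixed by any element of $\Fricke{p}\setminus\Gz{p}$. Then its $\Fricke{p}$-orbit splits into two distinct $\Gz{p}$-orbits, $\tau$ and $W_p\tau$, each with the same stabilizer and the same order of $f$. This gives $2\ord_\tau(f)/e_{\tau,p}$, as needed. If instead $\tau$ is fixed by some element of $\Fricke{p}\setminus\Gz{p}$ (for instance $i/\sqrt p$, and the extra elliptic points on $\A_p$), then the $\Fricke{p}$-orbit is a single $\Gz{p}$-orbit. In that case the stabilizer in $\Fricke{p}$ has twice the order of that in $\Gz{p}$, so $1/e_{\tau,\Gz{p}}=2/e_{\tau,p}$.

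Summing, the $\Gz{p}$-valence sum equals twice the $\Fricke{p}$-valence sum. Dividing $k(p+1)/12$ by $2$ gives the claimed $(p+1)k/24$.

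The main obstacle is this orbit bookkeeping, namely verifying that every elliptic point of $\Fricke{p}$ is counted correctly. One must check that points fixed by an Atkin--Lehner-type element really have doubled stabilizers. One must also check that boundary points of $\F_p$ identified by $\Fricke{p}$ are counted once. This is exactly what the normalization ``half of the order of the stabilizer'' in $e_\tau$ is designed to absorb. I would handle it by noting that $\pm\Gz{p}$ is normal of index $2$ in $\pm\Fricke{p}$. Then for every $\tau$ we have $[\mathrm{Stab}_{\Fricke{p}}(\tau):\mathrm{Stab}_{\Gz{p}}(\tau)]\cdot\#\{\Gz{p}\text{-orbits in }\Fricke{p}\tau\}=2$, which gives the factor $2$ uniformly.
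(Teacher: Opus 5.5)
Your argument is correct. The paper itself gives no proof of this statement; it simply quotes Choi--Kim, so I am comparing your argument with the natural alternative routes rather than with anything in the text.

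You pass to the index-two normal subgroup $\pm\Gz{p}$ for $p$ prime, with $p=1$ being the classical case, and invoke the valence formula for $\Gz{p}$ with right-hand side $\frac{k}{12}(p+1)$. You then show that every term is exactly doubled.
\begin{itemize}
\item The cusp $0$, of width $p$, contributes $\ord_{i\infty}(f)$, since $f|_k\begin{psmallmatrix}0&-1\\1&0\end{psmallmatrix}(\tau)=p^{-k/2}f(\tau/p)$. This identity also shows that $f$ is meromorphic there, so $f$ really lies in $\wM{k}{\Gz{p}}$.
\item The identity $[\mathrm{Stab}_{\Fricke p}(\tau):\mathrm{Stab}_{\Gz p}(\tau)]\cdot\#\{\Gz p\text{-orbits in }\Fricke p\,\tau\}=2$ treats all interior points uniformly.
\end{itemize}
As a sanity check, two kinds of point fall in the one-orbit case: new elliptic points such as $i/\sqrt p$ and $\rho_{p,2}$, and old elliptic points whose stabilizer doubles in passing from $\Gz p$ to $\Fricke p$ (namely $\rho_2$ and $\rho_3$). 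By contrast, $\rho_{5,1}$ and $\rho_{7,1}$ each merge the two elliptic points of $\Gz p$ of the same order, so they fall in the two-orbit case.

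What your route buys is that it is uniform in the prime $p$ and never uses the shape of $\F_p$. A direct Rankin--Swinnerton-Dyer style contour integral around $\F_p$ would instead need the explicit side pairings and corner angles of $\F_p$ case by case. That is exactly where the two arcs $\A_{p,1},\A_{p,2}$ make $p=5,7$ messier.

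The other standard route is the identity $\sum\ord_\tau(f)/e_\tau+\ord_{i\infty}(f)=\frac{k}{4\pi}\mathrm{vol}(\Fricke p\backslash\uH)$, valid for any cofinite Fuchsian group. It uses the same index-two fact in the form $\mathrm{vol}(\Fricke p\backslash\uH)=\frac12\mathrm{vol}(\Gz p\backslash\uH)$, and it matches the Riemann--Hurwitz formula quoted in the paper.

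The one point to tighten is your claim that the $\SLZ$ contour argument gives the formula for an arbitrary finite-index $\Gamma$ ``verbatim''. The formula is standard, but its clean justification is not a literal repetition of the integral. Use instead either the norm $\prod_{\gamma\in\pm\Gamma\backslash\SLZ}f|_k\gamma$, which is a weakly holomorphic form of weight $k[\SLZ:\pm\Gamma]$ for $\SLZ$, or the degree of the divisor of $f$ on $X_0(p)$.
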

\begin{remark}\label{rem:cusp}
By the valence formula, a nonzero $f\in \wM{k}{\Fricke{p}}$ has no zeros in $\uH$ if and only if $f$ satisfies $\ord_{i\infty}(f)=\frac{(p+1)}{24}k$. Thus, under the same assumptions as in \Cref{thm:main}, we have $\ord_{i\infty}(f)\neq\frac{p+1}{24}k$. In particular, the Fourier coefficient of $q^{\ord_{i\infty}(f)}$ in $\ser_k(f)$ is nonzero. Hence $\ser_k(f)$ is nonzero and $\ord_{i\infty}(\ser_k(f))=\ord_{i\infty}(f)$.
\end{remark}
\begin{example}[see \cites{MNS07}]
    For $p\in\{1,2,3\}$, the elliptic points of $\Fricke{p}$ in $\F_p$ are $\frac{i}{\sqrt{p}}$ and $\rho_p:=\frac{e^{i\alpha_p}}{\sqrt{p}}$ where $\alpha_p=\frac{p+7}{12}\pi$ . Then the valence formula for $\Fricke{p}$ is given by
\[
\ord_{i\infty}(f)+\frac{1}{2}\ord_{\frac{i}{\sqrt{p}}}(f)+\frac{5-p}{12}\ord_{\rho_p}(f)+\sum_{\substack{\tau\in\F_p\setminus\{ \frac{i}{\sqrt{p}},\rho_p\}}}\ord_{\tau}(f)=\frac{p+1}{24}k.
\]
\end{example}
\begin{example}[see \cites{Shi07}]
The elliptic points of $\Fricke{5}$ in $\F_5$  are $\frac{i}{\sqrt{5}}$, $\rho_{5,1}:=\frac{e^{i\alpha_{5}}}{\sqrt{5}}=-\frac{1}{2}+\frac{e^{i\beta_5}}{2\sqrt{5}}=-\frac{2}{5}+\frac{1}{5}i$ and $\rho_{5,2}:=-\frac{1}{2}+\frac{i}{2\sqrt{5}}$, where $\alpha_5,\beta_5\in(0,\pi)$. Then $\alpha_5-\beta_5=\frac{\pi}{2}$. Moreover, the valence formula for $\Fricke{5}$ is given by
\[
\ord_{i\infty}(f)+\frac{1}{2}\ord_{\frac{i}{\sqrt{5}}}(f)+\frac{1}{2}\ord_{\rho_{5,1}}(f)+\frac{1}{2}\ord_{\rho_{5,2}}(f)+\sum_{\substack{\tau\in\F_5\setminus\{\frac{i}{\sqrt{5}},\rho_{5,1},\rho_{5,2}\}}}\ord_{\tau}(f)=\frac{k}{4}.
\]
\end{example}
\begin{example}[see \cites{Shi07}]
The elliptic points of $\Fricke{7}$ in $\F_7$  are $\frac{i}{\sqrt{7}}$, $\rho_{7,1}:=\frac{e^{i\alpha_{7}}}{\sqrt{7}}=-\frac{1}{2}+\frac{e^{i\beta_7}}{2\sqrt{7}}=-\frac{5}{14}+\frac{\sqrt{3}}{14}i$ and $\rho_{7,2}:=-\frac{1}{2}+\frac{i}{2\sqrt{7}}$, where $\alpha_7,\beta_7\in(0,\pi)$. Then $\alpha_7-\beta_7=\frac{2\pi}{3}$. Moreover, the valence formula for $\Fricke{7}$ is given by
\[
\ord_{i\infty}(f)+\frac{1}{2}\ord_{\frac{i}{\sqrt{7}}}(f)+\frac{1}{3}\ord_{\rho_{7,1}}(f)+\frac{1}{2}\ord_{\rho_{7,2}}(f)+\sum_{\substack{\tau\in\F_7\setminus\{\frac{i}{\sqrt{7}},\rho_{7,1},\rho_{7,2}\}}}\ord_{\tau}(f)=\frac{k}{3}.
\]
\end{example}
We will use the following standard consequence of the local behavior of weakly holomorphic modular forms at elliptic points.
\begin{proposition}\label{prop:ell}
Let $f$ be a nonzero weakly holomorphic modular form of weight $k$ for $\Fricke{p}$.
For any elliptic point $\tau_0\in\F_p$, we have $\ord_{\tau_0}(f)\geq e_{\tau_0}-j$, where $j$ is the unique positive integer satisfying $1\leq j\leq e_{\tau_0}$ and $k\equiv 2j\pmod{2e_{\tau_0}}$.
\end{proposition}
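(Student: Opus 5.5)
The plan is to pass to a local coordinate at $\tau_0$ in which the stabilizer acts by rotations, and then compare Taylor coefficients. Every element of $\Fricke{p}$ acts on $\uH$ through a real $2\times 2$ matrix of determinant $1$; for the Atkin--Lehner coset this is the normalized matrix in the definition. So the stabilizer of $\tau_0$, modulo $\pm I$, is a finite cyclic group of order $e:=e_{\tau_0}$, and I fix a generator $\gamma=\begin{psmallmatrix}a&b\\c&d\end{psmallmatrix}$ of it.

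\textbf{Step 1: the automorphy factor at $\tau_0$.} Set $\lambda:=c\tau_0+d$. Since $\gamma$ fixes $\tau_0$, $\tau_0$ is an eigenvector of $\gamma$ with eigenvalue $\lambda$, and $\bar\lambda=\lambda^{-1}$ because the determinant is $1$. Because $\gamma$ has order $2e$ as a matrix, with $\gamma^{e}=-I$, $\lambda$ is a primitive $2e$-th root of unity.

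\textbf{Step 2: the disc coordinate.} Introduce
\[
w=\frac{\tau-\tau_0}{\tau-\bar\tau_0},\qquad F(w):=(\tau-\bar\tau_0)^{k}f(\tau).
\]
A direct computation with the fixed points $\tau_0,\bar\tau_0$ shows that $\gamma$ acts in this coordinate by $w\mapsto\lambda^{-2}w$. It also gives
\[
\gamma\tau-\bar\tau_0=\frac{\tau-\bar\tau_0}{\bar\lambda\,(c\tau+d)}.
\]
Substituting this and the modularity relation $f(\gamma\tau)=(c\tau+d)^kf(\tau)$ yields
\[
F(\lambda^{-2}w)=\bar\lambda^{-k}F(w)=\lambda^{k}F(w).
\]
The function $F$ is holomorphic near $w=0$, and $\ord_{w=0}F=\ord_{\tau_0}f$, since $\tau-\bar\tau_0$ does not vanish near $\tau_0$ and $w$ is a local coordinate there.

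\textbf{Step 3: compare coefficients.} Write $F(w)=\sum_{n\ge0}a_nw^n$. The functional equation gives $a_n\lambda^{-2n}=\lambda^{k}a_n$, so $a_n\neq0$ forces $\lambda^{2n+k}=1$, that is, $2n\equiv -k\pmod{2e}$. Using $k\equiv 2j\pmod{2e}$, this becomes $n\equiv -j\equiv e-j\pmod e$. Since $1\le j\le e$, the residue $e-j$ lies in $[0,e-1]$, so the smallest admissible $n\ge0$ is $e-j$. Hence $\ord_{\tau_0}(f)\ge e-j$.

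The main obstacle is the sign bookkeeping in Step 2. Whether the congruence comes out as $n\equiv -j$ or $n\equiv j$ depends on getting $\lambda$ versus $\bar\lambda$ right in both the action on $w$ and the transformation of $F$. I will verify the sign against a known case: for $\SLZ$ at $i$ we have $e=2$; weight $6$ gives $j=1$, so the bound is $\ord_i\ge1$, matching $E_6(i)=0$; weight $4$ gives $j=2$ and the trivial bound $0$, consistent with $E_4(i)\neq0$.

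Two further points need checking. First, I must confirm that the normalized Atkin--Lehner elements have determinant $1$, so that Step 1 applies to the Fricke elliptic points such as $i/\sqrt p$. Second, the argument is purely local, so weak holomorphy at the cusp plays no role.
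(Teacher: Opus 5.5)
Your proof is correct. The paper gives no proof of this proposition and only calls it a standard consequence of the local behavior at elliptic points; your disc-coordinate comparison of Taylor coefficients is exactly that standard argument. Your signs check out: $w\circ\gamma=\lambda^{-2}w$ and $F(\lambda^{-2}w)=\lambda^{k}F(w)$, so $a_n\neq0$ forces $n\equiv -j \pmod{e_{\tau_0}}$.

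One small slip: when $e$ is odd, a lift of a generator of the stabilizer modulo $\pm I$ need not satisfy $\gamma^{e}=-I$, since $(-\gamma)^{e}=I$. You should therefore take $\gamma$ to generate the full stabilizer in $\Fricke{p}$, which is cyclic of order $2e$ and contains $-I$. Step 3 survives either way: $2n+k$ is even, and $\lambda$ then has either order $2e$ or odd order $e$, so $2e\mid 2n+k$ in both cases.
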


\subsection{Riemann--Hurwitz Formula}
Let $g_p$ denote the genus of the compact Riemann surface
\[
X_0^*(p):=\Fricke{p}\backslash(\uH\cup\mathbb{P}^1(\Q)).
\]
We shall use \textit{the Riemann--Hurwitz formula}, which gives the total contribution of elliptic points in terms of the genus of $X_0^*(p)$~(see Theorem 4.3.1 in \cite{Kat92}). 
\begin{theorem}[The Riemann--Hurwitz Formula for $X_0^*(p)$]\label{thm:RHF}
The following formula holds:
\[
\sum_{\substack{\tau\in\F_p\\\tau\text{:elliptic}}}\left(1-\frac{1}{e_{\tau}}\right)=\frac{p+1}{12}-2g_p+1.
\]
\end{theorem}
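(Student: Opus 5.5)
The plan is to derive the formula from the Gauss--Bonnet (signature) formula for Fuchsian groups of the first kind (Theorem 4.3.1 in \cite{Kat92}). For such a group $\Gamma$ with signature $(g; m_1,\dots,m_r; s)$ it reads
\[
\mu(\Gamma\backslash\uH)=2\pi\Bigl(2g-2+\sum_{i=1}^{r}\bigl(1-\tfrac{1}{m_i}\bigr)+s\Bigr).
\]
Here $\mu$ is hyperbolic area and the $m_i$ are the orders of the elliptic elements in the image of $\Gamma$ in $\mathrm{PSL}_2(\mathbb{R})$. Since $-I\in\Fricke{p}$, the order of the stabilizer in $\mathrm{PSL}_2(\mathbb{R})$ of an elliptic point $\tau$ is exactly half the order of its stabilizer in $\Fricke{p}$, namely $e_\tau$. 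So the $m_i$ are the $e_\tau$ for elliptic $\tau\in\F_p$. Thus it suffices to compute the area $\mu(\F_p)$ and the number of cusps $s$ of $\Fricke{p}$.

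\textbf{Area.} For $p=1$ we have $\begin{psmallmatrix}0&-1\\1&0\end{psmallmatrix}\in\SLZ$, so $\Fricke{1}=\SLZ$ and $\mu(\F_1)=\pi/3$. For $p$ prime, the index $[\SLZ:\Gz{p}]=p+1$ (via $\mathbb{P}^1(\mathbb{F}_p)$), so $\mu(\Gz{p}\backslash\uH)=(p+1)\pi/3$. The Fricke involution normalizes $\Gz{p}$ and does not lie in $\Gz{p}$, so $[\Fricke{p}:\Gz{p}]=2$ after passing to $\mathrm{PSL}_2(\mathbb{R})$. Hence
\[
\mu(\F_p)=\frac{(p+1)\pi}{6},
\]
which also agrees with the case $p=1$.

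\textbf{Cusps.} For $p$ prime, $\Gz{p}$ has exactly two cusps, $i\infty$ and $0$, and the Fricke involution $\tau\mapsto -1/(p\tau)$ interchanges them. Hence $\Fricke{p}$ has a single cusp, so $s=1$; this also holds for $\SLZ$.

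\textbf{Conclusion.} Substituting into the Gauss--Bonnet formula gives
\[
\frac{(p+1)\pi}{6}=2\pi\Bigl(2g_p-1+\sum_{\tau\text{ elliptic}}\bigl(1-\tfrac{1}{e_\tau}\bigr)\Bigr).
\]
Dividing by $2\pi$ and rearranging yields $\sum(1-1/e_\tau)=\frac{p+1}{12}-2g_p+1$. As a sanity check, for $p=1$ the left side is $\tfrac12+\tfrac23=\tfrac76$, which matches $\tfrac{2}{12}+1$ with $g_1=0$.

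\textbf{Main obstacle.} The only delicate point is the bookkeeping between $\Fricke{p}$ and its image in $\mathrm{PSL}_2(\mathbb{R})$. One must check that the index of $\Gz{p}$ in $\Fricke{p}$, and the stabilizer orders, are computed consistently modulo $\pm I$; this is exactly why $e_\tau$ is defined as half the stabilizer order. A secondary point is that $\F_p$ really is a fundamental domain, so that its area equals $\mu(\Fricke{p}\backslash\uH)$. Given the explicit domains listed above, I would take this from \cite{MNS07} and \cite{Shi07}; alternatively it follows from the index computation alone.
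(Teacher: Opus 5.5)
Your proposal is correct and takes the same route as the paper, which gives no argument beyond citing the area (signature) formula, Theorem 4.3.1 in \cite{Kat92}. You apply exactly that formula and supply the inputs the citation leaves implicit: the area $(p+1)\pi/6$ from the index computation modulo $\pm I$, a single cusp, and the identification of the $m_i$ with the $e_\tau$.
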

\begin{remark}[see Choi--Kim~\cite{CK14}]
By the Riemann--Hurwitz formula, the modular curve $X_0^*(p)$ has genus $0$ if and only if $p=1, 2, 3, 5, 7, 11, 13, 17, 19, 23, 29, 31, 41, 47, 59, 71.$
\end{remark}

\section{Proof of \Cref{thm:main} for $p=1,2,3$}\label{sec:proof}
In this section, we prove \Cref{thm:main} for $p=1,2,3$ by observing the sign changes of a real-valued and real-analytic function associated with the Serre derivative of a modular form.

\subsection{Key Lemma}
First, we record an elementary lemma that will be used in the proof of \Cref{thm:main}; it follows from the local factorization of real-valued and real-analytic functions at their zeros.

\begin{lemma}\label{lem:key}
Let $g$ be a nonzero real-valued and real-analytic function on the open interval $(a,b)$ with finitely many zeros, and let $\theta_1<\theta_2<\cdots<\theta_{n}$ be all the distinct zeros of $g$. Define
\[
G(\theta):=\prod_{j=1}^{n}(\theta-\theta_j)^{m_j-1},
\]
where $m_j:=\ord_{\theta_j}(g)$. Then, for any $j\in\{1,\ldots,n-1\}$, we have
\[
\sgn\frac{g'}{G}(\theta_j)=-\sgn\frac{g'}{G}(\theta_{j+1}),
\]
where $g':=\frac{dg}{d\theta}$. Here $\frac{g'}{G}$ is understood after removing the removable singularities at the zeros of $g$. 
\end{lemma}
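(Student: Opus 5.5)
Near each zero $\theta_j$ we factor $g$ locally and compare the sign of $g$ on either side of consecutive zeros with the sign of $g'/G$ at those zeros. Since $g$ is real-analytic and $m_j=\ord_{\theta_j}(g)$, we can write $g(\theta)=(\theta-\theta_j)^{m_j}h_j(\theta)$ on a neighbourhood of $\theta_j$, with $h_j$ real-analytic and $h_j(\theta_j)\neq0$. Then
\[
g'(\theta)=(\theta-\theta_j)^{m_j-1}\bigl(m_jh_j(\theta)+(\theta-\theta_j)h_j'(\theta)\bigr).
\]
Dividing by $G$, the factor $(\theta-\theta_j)^{m_j-1}$ cancels, so $g'/G$ extends continuously to $\theta_j$ with value
\[
\frac{g'}{G}(\theta_j)=\frac{m_j\,h_j(\theta_j)}{\prod_{i\neq j}(\theta_j-\theta_i)^{m_i-1}} .
\]
This value is nonzero, and its sign is $\sgn h_j(\theta_j)\cdot\prod_{i\ne j}\sgn(\theta_j-\theta_i)^{m_i-1}$.

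Next we relate $\sgn h_j(\theta_j)$ to the sign of $g$ between zeros. The function $g$ has constant sign $\varepsilon_j$ on $(\theta_j,\theta_{j+1})$. Just to the right of $\theta_j$ we have $(\theta-\theta_j)^{m_j}>0$, so $\sgn h_j(\theta_j)=\varepsilon_j$. Just to the left of $\theta_{j+1}$ we have $\sgn(\theta-\theta_{j+1})^{m_{j+1}}=(-1)^{m_{j+1}}$, so $\sgn h_{j+1}(\theta_{j+1})=(-1)^{m_{j+1}}\varepsilon_j$.

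Now compare the products over $i$. For $i\notin\{j,j+1\}$, the numbers $\theta_j-\theta_i$ and $\theta_{j+1}-\theta_i$ have the same sign, because no other zero lies between $\theta_j$ and $\theta_{j+1}$. These factors therefore contribute equally to both sides. The remaining factors are:
\begin{itemize}
\item $(\theta_j-\theta_{j+1})^{m_{j+1}-1}$ in the expression at $\theta_j$, with sign $(-1)^{m_{j+1}-1}$;
\item $(\theta_{j+1}-\theta_j)^{m_j-1}$ in the expression at $\theta_{j+1}$, which is positive.
\end{itemize}

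Putting this together,
\[
\sgn\frac{g'}{G}(\theta_j)=\varepsilon_j(-1)^{m_{j+1}-1}S \quad\text{and}\quad \sgn\frac{g'}{G}(\theta_{j+1})=(-1)^{m_{j+1}}\varepsilon_j S,
\]
where $S$ is the common sign of the shared factors. The two signs are opposite, as claimed.

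The only delicate step is the sign bookkeeping: checking that the local factor at the neighbouring zero produces exactly one extra factor of $-1$, while all other factors agree on both sides. The rest is routine local analysis.
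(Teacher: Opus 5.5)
Your proof is correct and is essentially the paper's argument. Both proofs factor out the zeros, read off the value of $g'/G$ at $\theta_j$ from the leading term, and use a constant-sign argument. The difference is only in bookkeeping: the paper uses the global quotient $H=g/P$ with $P=\prod_j(\theta-\theta_j)^{m_j}$, which has constant sign on all of $(a,b)$. This gives $\frac{g'}{G}(\theta_j)=m_jH(\theta_j)\prod_{l\neq j}(\theta_j-\theta_l)$, whose sign is simply $(-1)^{n-j}\sgn H$, so no parity tracking of $m_{j+1}$ is needed. You instead use local factors $h_j$ together with the constant sign of $g$ on the gap $(\theta_j,\theta_{j+1})$.
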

\begin{proof}
Put
\[
P(\theta):=\prod_{j=1}^{n}(\theta-\theta_j)^{m_j}.
\]
Since $g$ is real-valued and real-analytic and its zeros are precisely $\theta_1,\ldots,\theta_n$, the function
\[
H(\theta):=\frac{g(\theta)}{P(\theta)}
\]
extends to a real-valued and real-analytic function on $(a,b)$ which has no zeros, and hence has a constant sign on $(a,b)$. By differentiating $g(\theta)=P(\theta)H(\theta)$ with respect to $\theta$, we have
\[
\frac{g'(\theta)}{G(\theta)}=\frac{P'(\theta)H(\theta)+P(\theta)H'(\theta)}{G(\theta)}.
\]
By the definition of $G(\theta)$, it follows that
\[
\begin{split}
    \frac{g'}{G}(\theta_j)&=\frac{P'H+PH'}{G}(\theta_j)=\frac{P'H}{G}(\theta_j)=m_jH(\theta_j)\prod_{l\neq j}(\theta_j-\theta_l).
\end{split}
\]
Therefore, we have
\[
\sgn{\frac{g'}{G}(\theta_j)}=(-1)^{n-j}\sgn H(\theta_j),
\]
and similarly
\[
\sgn{\frac{g'}{G}(\theta_{j+1})}=(-1)^{n-(j+1)}\sgn H(\theta_{j+1}).
\]
Since $H$ has a constant sign on $(a,b)$, we obtain
\[
\sgn{\frac{g'}{G}(\theta_j)}=-\sgn{\frac{g'}{G}(\theta_{j+1})}.
\]
This proves \Cref{lem:key}.
\end{proof}
\begin{remark}
When applying \Cref{lem:key} to zeros lying on a closed subinterval, we choose a slightly larger open interval containing these zeros and no additional zeros.
\end{remark}

\subsection{Proof of \Cref{thm:main} for $p=1,2,3$}
For a holomorphic function $f$ on $\uH$, we define the function $F_{k}(f;\theta)=F_{k,p}(f;\theta)$ on $(0,\pi)$ by
\[
F_k(f;\theta)=F_{k,p}(f;\theta):=e^{ik\theta/2}f\left(\frac{e^{i\theta}}{\sqrt{p}}\right).
\]
By the holomorphy of $f$, the function $F_k(f;\theta)$ is real-analytic on $(0,\pi)$.
The following lemma shows that, if $f\in\wM{k}{\Fricke{p}}$ has real Fourier coefficients, then $F_k(f;\theta)$ satisfies the assumptions of \Cref{lem:key}.
\begin{lemma}\label{lem:real1}
If $f\in\wM{k}{\Fricke{p}}$ has real Fourier coefficients, then $F_k(f;\theta)$ is real-valued on $(0,\pi)$. 
\end{lemma}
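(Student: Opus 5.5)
The plan is to show that $F_k(f;\theta)$ equals its own complex conjugate. I will use two ingredients: the Fricke involution $W_p=\begin{psmallmatrix}0&-\frac{1}{\sqrt{p}}\\\sqrt{p}&0\end{psmallmatrix}\in\Fricke{p}$, and the reflection $\tau\mapsto-\bar\tau$, which interacts with the Fourier expansion through the reality of the coefficients.

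\textbf{Step 1: a reflection identity.} Since $a_f(n)\in\mathbb{R}$ and $\overline{e^{2\pi i n\tau}}=e^{2\pi i n(-\bar\tau)}$, we get
\[
\overline{f(\tau)}=f(-\bar\tau)\qquad(\tau\in\uH).
\]
The expansion converges absolutely for all $\tau\in\uH$, so this holds throughout $\uH$.

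\textbf{Step 2: the Fricke involution on the arc.} Put $\tau=e^{i\theta}/\sqrt{p}$. Then
\[
W_p\tau=\frac{-1/\sqrt{p}}{\sqrt{p}\,\tau}=-\frac{1}{p\tau}=-\frac{e^{-i\theta}}{\sqrt{p}}=-\bar\tau .
\]
The automorphy factor is $c\tau+d=\sqrt{p}\,\tau=e^{i\theta}$. The modularity condition in the definition of $\wM{k}{\Fricke{p}}$ therefore gives
\[
f(-\bar\tau)=e^{ik\theta}f(\tau).
\]

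\textbf{Step 3: conclusion.} Combining Steps 1 and 2 gives $\overline{f(\tau)}=e^{ik\theta}f(\tau)$. Hence
\[
\overline{F_k(f;\theta)}=e^{-ik\theta/2}\,\overline{f(\tau)}=e^{-ik\theta/2}e^{ik\theta}f(\tau)=e^{ik\theta/2}f(\tau)=F_k(f;\theta),
\]
so $F_k(f;\theta)$ is real for every $\theta\in(0,\pi)$.

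No step is a real obstacle. The only point needing care is Step 2, specifically the automorphy factor $(c\tau+d)^k=(\sqrt{p}\,\tau)^k$ for the element $W_p$, whose entries are not integers. For even $k$ there is no sign ambiguity, since $(\sqrt{p}\,\tau)^k=e^{ik\theta}$ exactly. The argument works for any prime $p$ and for $p=1$, so it also applies in the later sections for $p=5,7$ on the arc $\abs{\tau}=1/\sqrt{p}$.
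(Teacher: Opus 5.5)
Your proposal is correct and follows the paper's proof essentially verbatim: the paper also combines the reflection identity $\overline{f(\tau)}=f(-\overline{\tau})$ (from the real coefficients) with the Fricke transformation $f\bigl(-\tfrac{1}{p\tau}\bigr)=(\sqrt{p}\,\tau)^k f(\tau)$ at $\tau=e^{i\theta}/\sqrt{p}$, noting that $-\tfrac{1}{p\tau}=-\overline{\tau}$ on this arc. The last step, $\overline{F_k(f;\theta)}=e^{-ik\theta/2}e^{ik\theta}f(\tau)=F_k(f;\theta)$, is the same computation as in the paper.
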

\begin{proof}
Since $f$ has real Fourier coefficients, we have
\[
\overline{f(\tau)}=\sum_{n}\overline{a_f(n)e^{2\pi in\tau}}=\sum_{n}a_f(n)e^{-2\pi in\overline{\tau}}=f(-\overline{\tau}).
\]
Combining this with
\[
f\left(-\frac{1}{p\tau}\right)=(\sqrt{p}\tau)^kf(\tau),
\]
where $\tau=e^{i\theta}/\sqrt{p}$, we obtain
\[
\overline{F_k(f;\theta)}=\overline{e^{ik\theta/2}f\left(\frac{e^{i\theta}}{\sqrt{p}}\right)}=e^{-ik\theta/2}f\left(-\frac{1}{p\cdot \frac{e^{i\theta}}{\sqrt{p}}}\right)=e^{-ik\theta/2}(e^{i\theta})^{k}f\left(\frac{e^{i\theta}}{\sqrt{p}}\right)=F_k(f;\theta).
\]
Thus, $F_k(f;\theta)$ is real-valued on $(0,\pi)$.
This proves \Cref{lem:real1}.
\end{proof}

With this notation in place, we now prove \Cref{thm:main} for $p=1,2,3$.
\begin{proof}[Proof of \Cref{thm:main} for $p=1,2,3$]
By \Cref{rem:cusp}, $\ser_k(f)$ is nonzero and
\[
\ord_{i\infty}(\ser_k(f))=\ord_{i\infty}(f).
\]
Since $\ser_k(f)$ is holomorphic on $\uH$, we have 
\[
\ord_{\tau}(\ser_k(f))\geq0
\]
for every $\tau\in\F_p$. By the valence formula for $\ser_k(f)$, we obtain
\[
\sum_{\tau\in\A_p}\frac{\ord_{\tau}(\ser_k(f))}{e_{\tau}}\leq\sum_{\tau\in\F_p}\frac{\ord_{\tau}(\ser_k(f))}{e_{\tau}}= \frac{p+1}{24}(k+2)-\ord_{i\infty}(\ser_k(f)).
\]
Therefore, to prove \Cref{thm:main}, it is enough to show that
\[
\sum_{\tau\in\A_p}\frac{\ord_{\tau}(\ser_k(f))}{e_{\tau}}\geq\frac{p+1}{24}(k+2)-\ord_{i\infty}(f).
\]

Let 
\[
\theta_1<\theta_2<\cdots<\theta_N
\]
be all the distinct zeros of $F_k(f;\theta)$ on $[\frac{\pi}{2},\alpha_p]$, and let $\tau_j$ be the point corresponding to $\theta_j$, namely
\[
\tau_j:=\frac{e^{i\theta_j}}{\sqrt{p}}.
\]
We also define
\[
G(\theta):=\prod_{j=1}^{N}(\theta-\theta_j)^{\ord_{\tau_j}(f)-1}. 
\]

By differentiating $F_k(f;\theta)$ with respect to $\theta$, we have
\[
F_k'(f;\theta)=\frac{ik}{2}F_k(f;\theta)-\frac{2\pi}{\sqrt{p}}F_{k+2}(D(f);\theta).
\]
By the definition of the Serre derivative $\ser_k(f)$, we have
\[
F_{k+2}(D(f);\theta)=F_{k+2}(\ser_k(f);\theta)+\frac{p+1}{24}kF_{2}(E_{2,p};\theta)F_{k}(f;\theta).
\]
It follows that
\begin{equation}\label{eq:eq}
F_{k+2}(\ser_k(f);\theta)+\frac{p+1}{24}kF_{2}(E_{2,p};\theta)F_{k}(f;\theta)=\frac{\sqrt{p}}{2\pi}\left(-F_k'(f;\theta)+\frac{ik}{2}F_k(f;\theta)\right).
\end{equation}
By \Cref{lem:key}, for each $1\leq j\leq N-1$, we have
\begin{equation}\label{eq:eq2}
\sgn\frac{F_k'(f;\cdot)}{G}(\theta_j)=-\sgn\frac{F_k'(f;\cdot)}{G}(\theta_{j+1}).
\end{equation}
Combining \Cref{eq:eq,eq:eq2}, we have
\[
\sgn\frac{F_{k+2}(\ser_k(f);\cdot)}{G}(\theta_j)=-\sgn\frac{F_{k+2}(\ser_k(f);\cdot)}{G}(\theta_{j+1}).
\]
By the intermediate value theorem, there exists $c_j\in (\theta_j,\theta_{j+1})$ such that $F_{k+2}(\ser_k(f);c_j)=0$. 

By the definition of the Serre derivative, we have $\ord_{\tau_j}(\ser_k(f))=\ord_{\tau_j}(f)-1$. Furthermore, for every elliptic point $\tau\in\F_p$ with $f(\tau)\neq0$, we have $\ord_\tau(\ser_k(f))\geq e_{\tau}-1$. Indeed, if $f(\tau)\neq0$ at an elliptic point $\tau$, then \Cref{prop:ell} implies $k\equiv0\pmod{2e_{\tau}}$. Applying \Cref{prop:ell} to $\ser_k(f)$, which has weight $k+2$, gives $\ord_\tau(\ser_k(f))\geq e_{\tau}-1$.

Thus, the weighted number of zeros of $\ser_k(f)$ in $\A_p$ is bounded below by
\begin{equation}\label{eq:lower1}
\begin{split}
\sum_{\tau\in\A_p}\frac{\ord_{\tau}(\ser_k(f))}{e_{\tau}}&\geq \sum_{j=1}^{N}\frac{\ord_{\tau_j}(f)-1}{e_{\tau_j}}+N-1+\sum_{\substack{\tau\in\F_p\\\tau\text{:elliptic}}}\frac{e_{\tau}-1}{e_{\tau}}\delta_{f(\tau)\neq0}\\
&=\sum_{j=1}^{N}\frac{\ord_{\tau_j}(f)}{e_{\tau_j}}+\sum_{j=1}^{N}\left(1-\frac{1}{e_{\tau_j}}\right)-1+\sum_{\substack{\tau\in\F_p\\\tau\text{:elliptic}}}\frac{e_{\tau}-1}{e_{\tau}}\delta_{f(\tau)\neq0}\\
&=\sum_{j=1}^{N}\frac{\ord_{\tau_j}(f)}{e_{\tau_j}}+\sum_{\substack{\tau\in\F_p\\\tau\text{:elliptic}}}\left(1-\frac{1}{e_{\tau}}\right)-1\\
&=\frac{p+1}{24}k-\ord_{i\infty}(f)+\frac{p+1}{12}\\
&=\frac{p+1}{24}(k+2)-\ord_{i\infty}(\ser_k(f))
\end{split}
\end{equation}
where $\delta_{f(\tau)\neq0}$ is equal to $1$ if $f(\tau)\neq0$ and equal to $0$ otherwise.  
The last two equalities in \Cref{eq:lower1} follow from the valence formula, the Riemann--Hurwitz formula, the fact that $g_p=0$ for $p=1,2,3$, and \Cref{rem:cusp}.
This proves \Cref{thm:main} for $p=1,2,3$.
\end{proof}

\section{Proof of \Cref{thm:main} for $p=5,7$}\label{sec:proof2}
In this section, we complete the proof of \Cref{thm:main} for \(p=5,7\).
In these cases, the lower boundary $\A_p$ is the union of two arcs $\A_{p,1}$ and $\A_{p,2}$, which meet at the elliptic point corresponding to $\theta=\alpha_p$.
Therefore, the argument in \Cref{sec:proof} needs to be modified slightly.

For a holomorphic function $f$ on $\uH$, we define
\[
F_{k}^{p,1}(f;\theta):=e^{ik\theta/2}f\left(\frac{e^{i\theta}}{\sqrt{p}}\right)\qquad (0<\theta<\pi),
\]
and
\[
F_{k}^{p,2}(f;\theta):=e^{ik(\theta-\alpha_p+\beta_p)/2}f\left(-\frac{1}{2}+\frac{e^{i(\theta-\alpha_p+\beta_p)}}{2\sqrt{p}}\right)\qquad
(\alpha_p-\beta_p<\theta<\pi+\alpha_p-\beta_p).
\]
We also define 
\[
F_{k}(f;\theta)=F_{k,p}(f;\theta):=\begin{cases}
{F}_{k}^{p,1}(f;\theta), &\text{if }\theta\in(0,\alpha_p],\\
F_{k}^{p,2}(f;\theta),&\text{if }\theta\in(\alpha_p,\pi+\alpha_p-\beta_p).
\end{cases}
\]
By the holomorphy of $f$, the functions $F_k^{p,1}(f;\theta)$ and $F_k^{p,2}(f;\theta)$ are real-analytic on $(0,\pi)$ and on $(\alpha_p-\beta_p,\pi+\alpha_p-\beta_p)$, respectively. 
The following lemma shows that, if $f\in\wM{k}{\Fricke{p}}$ has real Fourier coefficients, then $F_k^{p,1}(f;\theta)$ and $F_{k}^{p,2}(f;\theta)$ satisfy the assumptions of \Cref{lem:key}.
\begin{lemma}\label{lem:real2}
Let $f\in\wM{k}{\Fricke{p}}$ have real Fourier coefficients. Then the following hold:
\begin{enumerate}
    \item $F_k^{p,1}(f;\theta)$ is real-valued on $(0,\pi)$.
    \item $F_{k}^{p,2}(f;\theta)$ is real-valued on $(\alpha_p-\beta_p,\pi+\alpha_p-\beta_p)$.
\end{enumerate}
\end{lemma}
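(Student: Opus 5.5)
Part (1) is exactly \Cref{lem:real1}, so I would only reproduce that argument: since $f$ has real Fourier coefficients, $\overline{f(\tau)}=f(-\overline{\tau})$, and for $\tau=e^{i\theta}/\sqrt{p}$ we have $-\overline{\tau}=-1/(p\tau)$. The Fricke involution then gives $\overline{F_k^{p,1}(f;\theta)}=e^{-ik\theta/2}(e^{i\theta})^kf(\tau)=F_k^{p,1}(f;\theta)$.

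For part (2) the plan is the same, with the Fricke involution replaced by an element of $\Fricke{p}$ that maps the circle $\abs{\tau+\frac12}=\frac{1}{2\sqrt p}$ to itself and acts on it as $\tau\mapsto-\overline\tau$. The circle is $\abs{2\sqrt p\,\tau+\sqrt p}=1$, which suggests taking an element of $\Gz{p}\begin{psmallmatrix}0&-1/\sqrt p\\ \sqrt p&0\end{psmallmatrix}$ with bottom row $(2\sqrt p,\sqrt p)$. Elements of this coset have the Atkin--Lehner form $\frac{1}{\sqrt p}\begin{psmallmatrix}pa&b\\pc&pd\end{psmallmatrix}$ with $a,b,c,d\in\Z$ and $p^2ad-pbc=p$. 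With $c=2$, $d=1$ the determinant condition becomes $pa-2b=1$. Since $p\in\{5,7\}$ is odd, we may take $a=1$, $b=\frac{p-1}{2}$, so
\[
\gamma:=\frac{1}{\sqrt p}\begin{pmatrix}p&\frac{p-1}{2}\\2p&p\end{pmatrix}\in\Fricke{p}.
\]
Before using $\gamma$ I would check that it really lies in the coset: writing it as an element of $\Gz{p}$ times $\begin{psmallmatrix}0&-1/\sqrt p\\ \sqrt p&0\end{psmallmatrix}$, the $\Gz{p}$-factor should come out as an integral matrix with lower-left entry divisible by $p$.

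Next, for $\tau=-\frac12+\frac{e^{i\varphi}}{2\sqrt p}$ with $\varphi=\theta-\alpha_p+\beta_p\in(0,\pi)$, I compute $\gamma\tau$ and the automorphy factor. We have $2p\tau+p=\sqrt p\,e^{i\varphi}$ and $p\tau+\frac{p-1}{2}=\frac{\sqrt p e^{i\varphi}-1}{2}$, hence
\[
\gamma\tau=\frac12-\frac{e^{-i\varphi}}{2\sqrt p}=-\overline{\tau}.
\]
The automorphy factor is $(2\sqrt p\,\tau+\sqrt p)^k=e^{ik\varphi}$. Modularity of $f$ then gives $f(-\overline\tau)=e^{ik\varphi}f(\tau)$. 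Combining this with $\overline{f(\tau)}=f(-\overline\tau)$ yields
\[
\overline{F_k^{p,2}(f;\theta)}=e^{-ik\varphi/2}f(-\overline\tau)=e^{-ik\varphi/2}e^{ik\varphi}f(\tau)=F_k^{p,2}(f;\theta),
\]
so $F_k^{p,2}(f;\theta)$ is real-valued on the stated interval.

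The only real obstacle is finding the right element $\gamma$ of $\Fricke{p}$ and confirming its membership. The rest is the same one-line computation as in \Cref{lem:real1}, plus a check that the parametrization by $\theta$ traces the whole open semicircle.
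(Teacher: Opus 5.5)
Your proposal is correct and is essentially the paper's proof. The paper uses the same element $\begin{psmallmatrix}\sqrt{p}&\frac{p-1}{2\sqrt{p}}\\2\sqrt{p}&\sqrt{p}\end{psmallmatrix}\in\Fricke{p}$, the same identity $\gamma\tau=-\overline{\tau}$ on the circle $\abs{\tau+\frac12}=\frac{1}{2\sqrt p}$, and the same automorphy factor $(2\sqrt p\,\tau+\sqrt p)^k=e^{ik\varphi}$. The membership check you deferred does go through: $\gamma=\begin{psmallmatrix}-\frac{p-1}{2}&1\\-p&2\end{psmallmatrix}\begin{psmallmatrix}0&-\frac{1}{\sqrt p}\\\sqrt p&0\end{psmallmatrix}$, and the left factor lies in $\Gz{p}$ because $p$ is odd.
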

\begin{proof}
The first assertion can be proved in exactly the same way as \Cref{lem:real1}. Thus, we only consider the second assertion. It is enough to show that the function
\[
g(\theta):=e^{ik\theta/2}f\left(-\frac{1}{2}+\frac{e^{i\theta}}{2\sqrt{p}}\right)\qquad(0<\theta<\pi)
\]
is real-valued on $(0,\pi)$.

Set
\[
\tau=-\frac{1}{2}+\frac{e^{i\theta}}{2\sqrt{p}}.
\]
Then a straightforward calculation gives
\[
-\overline{\tau}=\frac{\sqrt{p}\tau+\frac{p-1}{2\sqrt{p}}}{2\sqrt{p}\tau+\sqrt{p}}.
\]
Since $f\in\wM{k}{\Fricke{p}}$ has real Fourier coefficients and $\begin{psmallmatrix}
    \sqrt{p}&\frac{p-1}{2\sqrt{p}}\\2\sqrt{p}&\sqrt{p}
\end{psmallmatrix}\in\Fricke{p}$, we have
\[
\overline{f(\tau)}=f(-\overline{\tau})=f\left(\frac{\sqrt{p}\tau+\frac{p-1}{2\sqrt{p}}}{2\sqrt{p}\tau+\sqrt{p}}\right)=(2\sqrt{p}\tau+\sqrt{p})^{k}f(\tau).
\]
It follows that
\[
\begin{split}
    \overline{g(\theta)}&=\overline{e^{ik\theta/2}f(\tau)}=e^{-ik\theta/2}(2\sqrt{p}\tau+\sqrt{p})^{k}f(\tau)=e^{ik\theta/2}f(\tau)=g(\theta).
\end{split}
\]
Thus, $g$ is real-valued on $(0,\pi)$. This proves the second assertion. 
\end{proof}

With this notation in place, we now prove \Cref{thm:main} for $p=5,7$.
\begin{proof}[Proof of \Cref{thm:main} for $p=5,7$]
Let
\[
\theta_1<\cdots<\theta_{M}
\]
be all the distinct zeros of $F_k(f;\theta)$ on $[\frac{\pi}{2},\alpha_p]$, and let
\[
\theta_{M+1}<\cdots<\theta_{M+N}
\]
be all the distinct zeros of $F_k(f;\theta)$ on $(\alpha_p,\alpha_p-\beta_p+\frac{\pi}{2}]$. For each $j$, let $\tau_j\in\A_p$ be the point corresponding to $\theta_j$, namely
\[
\tau_j:=\begin{cases}
    \frac{e^{i\theta_j}}{\sqrt{p}}&\text{if }j=1,\ldots,M,\\
    -\frac{1}{2}+\frac{e^{i(\theta_j-\alpha_p+\beta_p)}}{2\sqrt{p}}&\text{if }j=M+1,\ldots,M+N.
\end{cases}
\]
We also define
\[
G(\theta):=\prod_{j=1}^{M+N}(\theta-\theta_j)^{\ord_{\tau_j}(f)-1}.
\]

As in \Cref{sec:proof}, \Cref{lem:key} implies that, if $M\geq2$, then for every $1\leq j\leq M-1$, there exists $c_{j}\in(\theta_j,\theta_{j+1})$ such that $F_{k+2}(\ser_k(f);c_j)=F_{k+2}^{p,1}(\ser_k(f);c_j)=0$. Moreover, if $N\geq2$, then for every $1\leq j\leq N-1$, there exists $c_{M+j}\in(\theta_{M+j},\theta_{M+j+1})$ such that $F_{k+2}(\ser_k(f);c_{M+j})=F_{k+2}^{p,2}(\ser_k(f);c_{M+j})=0$. 
For each such $c_j$, let $z_j\in\A_p$ be the corresponding zero of $\ser_k(f)$, namely
\[
z_j:=
\begin{cases}
    \frac{e^{ic_j}}{\sqrt{p}}&\text{if }j=1,\ldots,M-1,\\
    -\frac{1}{2}+\frac{e^{i(c_j-\alpha_p+\beta_p)}}{2\sqrt{p}}&\text{if }j=M+1,\ldots,M+N-1.
\end{cases}
\]

As in \Cref{sec:proof}, \Cref{prop:ell} gives $\ord_\tau(\ser_k(f))\geq e_{\tau}-1$ for every elliptic point $\tau\in\F_p$ with $f(\tau)\neq0$.
By the definition of the Serre derivative, we have $\ord_{\tau_j}(\ser_k(f))=\ord_{\tau_j}(f)-1$.
In addition, by \Cref{rem:cusp}, $\ser_k(f)$ is nonzero and $\ord_{i\infty}(\ser_k(f))=\ord_{i\infty}(f)$.

We divide the proof into two cases:
(1) $MN=0$, and (2) $MN\neq0$. 
\subsection*{Case (1)}
If $MN=0$, the weighted number of zeros of $\ser_k(f)$ in $\A_p$ is bounded below by
\[
\begin{split}
\sum_{\tau\in\A_p}\frac{\ord_{\tau}(\ser_{k}(f))}{e_{\tau}}&\geq\sum_{j=1}^{M+N}\frac{\ord_{\tau_j}(f)-1}{e_{\tau_j}}+M+N-1+\sum_{\substack{\tau\in\F_p\\\tau\text{:elliptic}}}\frac{e_{\tau}-1}{e_{\tau}}\delta_{f(\tau)\neq0}\\
&=\frac{p+1}{24}(k+2)-\ord_{i\infty}(\ser_k(f)).
\end{split}
\]
The last equality follows from the same computation as in \Cref{eq:lower1}.
Together with the valence formula and \Cref{rem:cusp}, this proves \Cref{thm:main} in Case (1).

\subsection*{Case (2)}
 If $MN\neq0$, the weighted number of zeros of $\ser_k(f)$ in $\A_p$ is bounded below by
 \begin{equation}\label{eq:ad}
    \begin{split}
     \sum_{\tau\in\A_p}\frac{\ord_{\tau}(\ser_k(f))}{e_{\tau}}&\geq\sum_{j=1}^{M+N}\frac{\ord_{\tau_j}(f)-1}{e_{\tau_j}}+(M-1)+(N-1)+\sum_{\substack{\tau\in\F_p\\\tau\text{:elliptic}}}\frac{e_{\tau}-1}{e_{\tau}}\delta_{f(\tau)\neq0}\\
&=\frac{p+1}{24}(k+2)-\ord_{i\infty}(\ser_{k}(f))-1.
 \end{split}
 \end{equation}
The last equality follows from the same computation as in \Cref{eq:lower1}.

We now show that any additional zero of $\ser_k(f)$ also lies on $\A_p$. 
Let $Z_{\ser_k(f)}\subset\F_p$ denote the set of zeros of $\ser_k(f)$ in $\F_p$ that are not contained in
\[
L_{\ser_k(f)}:=\{\tau_1,\ldots,\tau_{M+N}\}
\cup
\{z_1,\ldots,z_{M-1},z_{M+1},\ldots,z_{M+N-1}\}
\cup
\left\{\frac{i}{\sqrt p},\rho_{p,1},\rho_{p,2}\right\}.
\]
Since $Z_{\ser_k(f)}$ does not contain elliptic points of $\Fricke{p}$, we have $e_z=1$ for any $z\in Z_{\ser_k(f)}$. Moreover, by the valence formula and \Cref{eq:ad}, we have
\[
\sum_{z\in Z_{\ser_k(f)}}\frac{\ord_{z}(\ser_k(f))}{e_{z}}\leq1.
\]
It follows that $\#Z_{\ser_k(f)}\leq1$.
We shall use the following lemma, whose proof will be given later.
\begin{lemma}\label{lem:57}
With the notation above, we have $Z_{\ser_k(f)}\subset\A_p$.
\end{lemma}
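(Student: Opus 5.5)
The approach combines the symmetry of $\ser_k(f)$ under $\tau\mapsto-\overline{\tau}$ with a sign-change argument across the elliptic point $\rho_{p,1}$ where the two arcs $\A_{p,1}$ and $\A_{p,2}$ meet. By \Cref{rem:cusp} we may assume $Z_{\ser_k(f)}=\{z_0\}$ with $\ord_{z_0}(\ser_k(f))=1$; otherwise there is nothing to prove.

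\textbf{Step 1: reduction to fixed lines of the reflection.} Since $f$ has real Fourier coefficients, so do $E_{2,p}$ and $D(f)$, and hence $\ser_k(f)$. Therefore $\overline{\ser_k(f)(\tau)}=\ser_k(f)(-\overline{\tau})$, and $-\overline{z_0}$ is again a zero. Let $z_0'$ be its $\Fricke{p}$-representative in $\F_p$. Then $z_0'\notin L_{\ser_k(f)}$, because $L_{\ser_k(f)}$ is stable under this reflection modulo $\Fricke{p}$: it lies on $\A_p$, which is pointwise fixed, or consists of elliptic points. So $z_0'\in Z_{\ser_k(f)}$, and $\#Z_{\ser_k(f)}\le 1$ forces $z_0'=z_0$. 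Hence $z_0$ lies on the fixed locus of the reflection modulo $\Fricke{p}$ inside $\F_p$. This locus is $\A_p$ together with the vertical half-lines $\Re\tau=0$ and $\Re\tau=-\frac12$ above the arcs. It remains to rule out these two half-lines.

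\textbf{Step 2: the missing sign change across $\rho_{p,1}$.} Equivalently, it suffices to produce one further zero of $F_{k+2}(\ser_k(f);\theta)$ on the half-open interval $(\theta_M,\theta_{M+1}]$ (or exactly at $\alpha_p$ with enough multiplicity), not already in $L_{\ser_k(f)}$. Such a zero is automatically $z_0$, so $z_0\in\A_p$. To find it, I would relate $F^{p,1}_{k}$ and $F^{p,2}_{k}$ near $\theta=\alpha_p$ using the elliptic element $\gamma\in\Fricke{p}$ fixing $\rho_{p,1}$, of order $4$ for $p=5$ and $6$ for $p=7$. This element maps the arc $\A_{p,1}$ near $\rho_{p,1}$ onto the continuation of $\A_{p,2}$; the angle $\alpha_p-\beta_p\in\{\frac{\pi}{2},\frac{2\pi}{3}\}$ is the rotation angle. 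The automorphy factor of $\gamma$ at points of the arc is a unimodular constant times $e^{i(\cdots)\theta}$. Consequently $F^{p,2}_k(f;\theta)=\pm F^{p,1}_k(f;\tilde\theta)$ for an explicit affine reparametrization $\tilde\theta$ with a computable sign, and likewise for $\ser_k(f)$ in weight $k+2$. The sign depends on $k \bmod 2e_{\rho_{p,1}}$ and is the source of the extra sign flip.

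\textbf{Step 3: sign bookkeeping.} Apply the formula in the proof of \Cref{lem:key} to the real-analytic function $F^{p,1}_k(f;\cdot)$ on an interval extending past $\alpha_p$, and transport the result to the second arc via Step 2. Combining this with \eqref{eq:eq} at $\theta_M$ and $\theta_{M+1}$, I aim to show that the signs of $F_{k+2}(\ser_k(f);\cdot)/G$ at $\theta_M$ and $\theta_{M+1}$, after the transport sign is accounted for, force an odd number of zeros counted with multiplicity in between. These zeros also count the forced vanishing at $\rho_{p,1}$ from \Cref{prop:ell}. The forced order at $\rho_{p,1}$ is already included in \eqref{eq:ad}, so one extra sign change produces the needed new zero.

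\textbf{Main obstacle.} I expect the careful case analysis at $\rho_{p,1}$ to be the hardest part. One must distinguish whether $f(\rho_{p,1})=0$, that is, whether $\theta_M=\alpha_p$, and handle the residues of $k$ modulo $4$ or $6$ when computing the transport sign. One must also check that the forced order of $\ser_k(f)$ at $\rho_{p,1}$ does not absorb the sign change. As a fallback if this bookkeeping fails, I would rule out the vertical half-lines directly. On $\tau=it$ with $t>1/\sqrt{p}$, the function $i^{-k-2}\ser_k(f)(it)$ is real, and its number of zeros there has a parity determined by its sign at $t\to\infty$ (from the leading coefficient) and at $t=1/\sqrt p$ (from $F_{k+2}(\ser_k(f);\frac{\pi}{2})$). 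A similar parity computation applies on $\Re\tau=-\frac12$.
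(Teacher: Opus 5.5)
Your Step 1 is correct, since every point of $L_{\ser_k(f)}$ lies on $\A_p$, which is pointwise fixed by $\tau\mapsto-\overline{\tau}$ modulo $\Fricke{p}$. But the main path (Steps 2--3) is only a plan, and the mechanism it relies on is wrong for $p=5$.

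\textbf{The transport identity fails for $p=5$.} Modulo $\pm1$, the stabilizer of $\rho_{5,1}=-\frac25+\frac i5$ is generated by $\begin{psmallmatrix}-2&-1\\5&2\end{psmallmatrix}$. This element acts as a rotation by $\pi$ about $\rho_{5,1}$, not by $\alpha_5-\beta_5=\frac{\pi}{2}$. It maps each geodesic through $\rho_{5,1}$ to itself. So it does not carry $\A_{5,1}$ onto the continuation of $\A_{5,2}$, and there is no identity $F^{5,2}_k(f;\theta)=\pm F^{5,1}_k(f;\tilde\theta)$. (For $p=7$ a rotation by $\pm\frac{2\pi}{3}$ does swap the two geodesics, but that does not help for $p=5$.)

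What is actually available, and all the paper uses, is a pointwise comparison at $\theta=\alpha_p$, where both parametrizations pass through $\rho_{p,1}$. First, $F^{p,1}_k(f;\alpha_p)=e^{ik(\alpha_p-\beta_p)/2}F^{p,2}_k(f;\alpha_p)$. Second, if $\ser_k(f)$ vanishes to order exactly $m$ at $\rho_{p,1}$, the $m$-th $\theta$-derivatives of $F^{p,1}_{k+2}(\ser_k(f);\cdot)$ and $F^{p,2}_{k+2}(\ser_k(f);\cdot)$ at $\alpha_p$ differ by the factor $2^me^{i(k+2+2m)(\alpha_p-\beta_p)/2}$. Via \Cref{lem:key} and \eqref{eq:eq}, the first relation ties the signs of $F_{k+2}(\ser_k(f);\cdot)/G$ at $\theta_M$ and $\theta_{M+1}$. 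The second ties its signs just left and just right of $\alpha_p$. A mismatch between the two yields the extra zero.

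\textbf{The missing ingredient is the exact value of $m$.} The outcome of this comparison depends on $m$ itself, not only on $m$ modulo $e_{\rho_{p,1}}$ as supplied by \Cref{prop:ell}. For $p=5$ (with $k\equiv0\pmod 4$), $m=1$ forces a zero in $(\theta_M,\alpha_5)\cup(\alpha_5,\theta_{M+1})$, but $m=3$ gives consistent signs and forces nothing. For $p=7$ (with $k\equiv0\pmod6$), $m=2$ forces one, but $m=5$ does not. So your remark that the forced vanishing ``is already included in \eqref{eq:ad}'' does not settle the issue.

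The paper closes it as follows, when $f(\rho_{p,1})\neq0$. The extra point $z_0$ contributes at least $1$ to the valence formula, while \eqref{eq:ad} already accounts for all but $1$. Hence every inequality in \eqref{eq:ad} is an equality; in particular $\ord_{\rho_{p,1}}(\ser_k(f))=e_{\rho_{p,1}}-1$.

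\textbf{Other loose ends.} You leave the case $\theta_M=\alpha_p$ open, though it is immediate: apply \Cref{lem:key} to $F^{p,2}_k(f;\cdot)$ on an interval containing $\alpha_p$ and $\theta_{M+1}$, which gives a zero in $(\alpha_p,\theta_{M+1})$. Your fallback on the vertical half-lines is not carried out either. It would need the same kind of exact-order and local-direction analysis at $\frac{i}{\sqrt p}$ and $\rho_{p,2}$, where $\ser_k(f)$ may vanish, so it does not close the gap.
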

Since $L_{\ser_k(f)}\subset\A_p$, \Cref{lem:57} implies
\[
\{z\in\F_p\mid\ser_k(f)(z)=0\}\subset \A_p.
\]
Hence \Cref{thm:main} follows.
\end{proof}

We now turn to the proof of \Cref{lem:57}.
\begin{proof}[Proof of \Cref{lem:57}]
The assertion is trivial if $Z_{\ser_k(f)}=\emptyset$.
Thus, we may assume that $\#Z_{\ser_k(f)}=1$.
It is enough to show that $F_{k+2}(\ser_k(f);\theta)$ has a zero either in $(\theta_M,\alpha_p)$ or in $(\alpha_p,\theta_{M+1})$.
Indeed, the zero of $\ser_k(f)$ corresponding to such a zero of $F_{k+2}(\ser_k(f);\theta)$ is contained in $\A_p$, but not contained in $L_{\ser_k(f)}$.
We divide the proof into two cases:
(1) $\theta_M=\alpha_p$ and (2) $\theta_M\neq\alpha_p$. 

\subsection*{Case (1)}
If $\theta_M=\alpha_p$, then $F_{k}^{p,2}(f;\theta_M)=0$. By \Cref{lem:key}, it follows that there exists $c_M\in(\theta_M,\theta_{M+1})$ such that $F_{k+2}(\ser_k(f);c_M)=F_{k+2}^{p,2}(\ser_k(f);c_M)=0$. 

\subsection*{Case (2)}
If $\theta_M\neq\alpha_p$, then $f(\rho_{p,1})\neq0$. Hence, by \Cref{prop:ell}, we have 
\[
k\equiv 0\pmod{2e_{\rho_{p,1}}}.
\]
By \Cref{prop:ell}, we have
\[
\ord_{\rho_{p,1}}(\ser_k(f))\ge e_{\rho_{p,1}}-1.
\]
By \Cref{eq:ad}, the valence formula, and the assumption $\#Z_{\ser_k(f)}=1$, this inequality must be an equality. Hence
\[
\ord_{\rho_{p,1}}(\ser_k(f))=e_{\rho_{p,1}}-1.
\]
It follows that
\[
F_{k+2}^{p,1}(\ser_k(f);\alpha_p)=0=F_{k+2}^{p,2}(\ser_k(f);\alpha_p).
\]
Thus $F_{k+2}(\ser_k(f);\theta)$ is continuous at $\theta=\alpha_p$. We consider the subcases $p=5$ and $p=7$ separately.

\subsection*{Subcase (2)--1} 
If $p=5$, we have $e_{\rho_{5,1}}=2$, $k\equiv0\pmod{4}$, and $\alpha_5-\beta_5=\frac{\pi}{2}$. By the definition of $F_{k}^{5,1}(f;\theta)$ and $F_{k}^{5,2}(f;\theta)$, we have
\[
F_{k}^{5,1}(f;\alpha_5)=(-1)^{\frac{k}{4}}F_{k}^{5,2}(f;\alpha_5).
\]
Combining this relation with \Cref{lem:key}, we obtain
\[
\sgn\frac{F_{k+2}(\ser_k(f);\cdot)}{G}(\theta_{M})=(-1)^{\frac{k}{4}+1}\sgn\frac{F_{k+2}(\ser_k(f);\cdot)}{G}(\theta_{M+1}).
\]
By differentiating $F_{k+2}(\ser_k(f);\theta)$ with respect to $\theta$, we have
\[
F_{k+2}'(\ser_k(f);\theta)=\begin{cases}
\begin{aligned}
&\frac{i(k+2)}{2}F_{k+2}^{5,1}(\ser_k(f);\theta)\\&~~~~~~~~~~-\frac{2\pi}{\sqrt{5}}F_{k+4}^{5,1}(D(\ser_k(f));\theta)
\end{aligned}&\theta\in(0,\alpha_5),\\
\begin{aligned}
&\frac{i(k+2)}{2}F_{k+2}^{5,2}(\ser_k(f);\theta)\\&~~~~~~~~~~-\frac{\pi}{\sqrt{5}}F_{k+4}^{5,2}(D(\ser_k(f));\theta)
\end{aligned}&\theta\in(\alpha_5,\pi+\alpha_5-\beta_5).
\end{cases}
\]
Since $\ord_{\rho_{5,1}}\ser_k(f)=1$, the left- and right-hand limits of $F_{k+2}'(\ser_k(f);\theta)$ at $\theta=\alpha_5$ are nonzero. Using the definitions of $F_{k}^{5,1}$ and $F_{k}^{5,2}$, together with $\alpha_5-\beta_5=\frac{\pi}{2}$, we have
\[
\lim_{\theta\to\alpha_5-0}\sgn \frac{F_{k+2}'(\ser_k(f);\theta)}{G(\theta)}=(-1)^{\frac{k}{4}+1}\lim_{\theta\to\alpha_5+0}\sgn\frac{F_{k+2}'(\ser_k(f);\theta)}{G(\theta)}.
\]
It follows that
\[
\lim_{\theta\to\alpha_5-0}\sgn \frac{F_{k+2}(\ser_k(f);\theta)}{G(\theta)}=(-1)^{\frac{k}{4}}\lim_{\theta\to\alpha_5+0}\sgn\frac{F_{k+2}(\ser_k(f);\theta)}{G(\theta)}.
\]
Therefore, one of the following holds:~
\[
\begin{split}
    \lim_{\theta\to\alpha_5-0}\sgn \frac{F_{k+2}(\ser_k(f);\theta)}{G(\theta)}&=-\sgn \frac{F_{k+2}(\ser_k(f);\cdot)}{G}(\theta_M),\\
\lim_{\theta\to\alpha_5+0}\sgn \frac{F_{k+2}(\ser_k(f);\theta)}{G(\theta)}&=-\sgn \frac{F_{k+2}(\ser_k(f);\cdot)}{G}(\theta_{M+1}).
\end{split}
\]
Thus, we obtain an additional zero of $F_{k+2}(\ser_k(f);\theta)$ either in $(\theta_M,\alpha_5)$ or in $(\alpha_5,\theta_{M+1})$.

\subsection*{Subcase (2)--2} 
If $p=7$, we have $e_{\rho_{7,1}}=3$ and $\alpha_7-\beta_7=\frac{2\pi}{3}$. Since $\theta_M\neq\alpha_7$, we have $f(\rho_{7,1})\neq0$. Hence, by \Cref{prop:ell}, the weight $k$ of $f$ satisfies $k\equiv0\pmod{6}$. By the definition of $F_{k}^{7,1}(f;\theta)$ and $F_{k}^{7,2}(f;\theta)$, we have 
\[
\begin{split}
F_{k}^{7,1}(f;\alpha_7)&=F_{k}^{7,2}(f;\alpha_7).
\end{split}
\]
Combining this relation with \Cref{lem:key}, we obtain
\[
\sgn \frac{F_{k+2}(\ser_k(f);\cdot)}{G}(\theta_M)=-\sgn \frac{F_{k+2}(\ser_k(f);\cdot)}{G}(\theta_{M+1}).
\]
Since $\ord_{\rho_{7,1}}\ser_k(f)=2$, the left- and right-hand limits of $F_{k+2}''(\ser_k(f);\theta)$ at $\theta=\alpha_7$ are nonzero.
Using the definitions of $F_{k}^{7,1}$ and $F_{k}^{7,2}$, together with $\alpha_7-\beta_7=\frac{2\pi}{3}$, we have
\[
\lim_{\theta\to\alpha_7-0}\sgn \frac{F_{k+2}''(\ser_k(f);\theta)}{G(\theta)}=\lim_{\theta\to\alpha_7+0}\sgn \frac{F_{k+2}''(\ser_k(f);\theta)}{G(\theta)},
\]
which implies
\[
\lim_{\theta\to\alpha_7-0}\sgn \frac{F_{k+2}(\ser_k(f);\theta)}{G(\theta)}=\lim_{\theta\to\alpha_7+0}\sgn \frac{F_{k+2}(\ser_k(f);\theta)}{G(\theta)}.
\]
As in the case $p=5$, we obtain an additional zero of $F_{k+2}(\ser_k(f);\theta)$ either in $(\theta_M,\alpha_7)$ or in $(\alpha_7,\theta_{M+1})$. 
\end{proof}

\begin{remark}
It would be interesting to extend \Cref{thm:main} to all genus-zero Fricke groups $\Fricke{p}$.
We expect that the same conclusion remains valid in this generality, although the geometry of the lower boundary of the standard fundamental domain becomes more complicated for larger $p$.
\end{remark}

\section{Proof of \Cref{maincor}}\label{sec:proof3}

In this section, we prove \Cref{maincor}.
The key point is that \Cref{thm:main} can be applied repeatedly, because the Serre derivative preserves modularity and real Fourier coefficients.
\begin{proof}[Proof of \Cref{maincor}]
First, we note that, by \Cref{prop:ser} and \Cref{rem:cusp}, $\ser_k(f)$ is a nonzero weakly holomorphic modular form of weight $k+2$ for $\Fricke{p}$ with $\ord_{i\infty}(\ser_k(f))=\ord_{i\infty}(f)$.

By the valence formula, we have
\[
\sum_{\tau\in\F_p}\frac{\ord_{\tau}(\ser_k(f))}{e_{\tau}}=\frac{p+1}{24}(k+2)-\ord_{i\infty}(\ser_k(f))=\frac{p+1}{12}+\left(\frac{p+1}{24}k-\ord_{i\infty}(f)\right)>0.
\]
Indeed, since $f$ has at least one zero in $\uH$, the valence formula implies
\[
\sum_{\tau\in\F_p}\frac{\ord_{\tau}(f)}{e_{\tau}}=\frac{p+1}{24}k-\ord_{i\infty}(f)>0.
\]
Therefore, $\ser_k(f)$ has at least one zero in $\uH$.  By \Cref{thm:main}, all the zeros of $\ser_k(f)$ in $\F_p$ lie on the lower boundary $\A_p$. 
Since the Serre derivative preserves real Fourier coefficients, $\ser_k(f)$ satisfies the assumptions of \Cref{thm:main}.
Thus, we can apply the same argument repeatedly.
\end{proof}

\section{An Alternative Proof of \Cref{thm:main} for $p=1$}\label{sec:alt}

In this section, we give an alternative proof of \Cref{thm:main} for $p=1$, due to T. Nakaya. This proof is independent of the sign-change argument used in the previous sections and is based instead on the polynomial expression of modular forms in terms of the $j$-invariant. This method is elementary, but it requires several case distinctions.

Let
\[
\Delta(\tau):=\frac{1}{1728}(E_4(\tau)^3-E_6(\tau)^2)
\]
be the modular discriminant, and let
\[
j(\tau):=\frac{E_4(\tau)^3}{\Delta(\tau)}\in\wM{0}{\SLZ}
\]
be the classical modular $j$-invariant.
It is known that, for any $\tau\in\F_1$, $\tau\in\A_1$ holds if and only if $j(\tau)\in[0,1728]$.
Moreover, for any $f\in\wM{k}{\SLZ}$, there exists a unique polynomial $P_f\in\C[X]$ such that
\[
f(\tau)=E_{4}(\tau)^\varepsilon E_6(\tau)^\delta\Delta(\tau)^{m}P_f(j(\tau))
\]
where $(\varepsilon,\delta,m)\in\{0,1,2\}\times\{0,1\}\times\Z$ is a triple of  integers satisfying $k=4\varepsilon+6\delta+12m$.

\subsection{An Alternative Proof of \Cref{thm:main} for $p=1$}
We now give a sketch of an alternative proof of \Cref{thm:main} for $p=1$. For a nonzero $f\in\wM{k}{\SLZ}$ with real Fourier coefficients, let $P_f\in\mathbb{R}[X]$ denote the unique polynomial satisfying 
\[
f(\tau)=E_4(\tau)^\varepsilon E_6(\tau)^\delta\Delta(\tau)^mP_f(j(\tau)),
\]
where $(\varepsilon,\delta,m)\in \{0,1,2\}\times\{0,1\}\times\Z$ with $k=4\varepsilon+6\delta+12m$, and let $P_{\ser_k(f)}(X)\in\mathbb{R}[X]$ be the unique polynomial satisfying 
\[
\ser_k(f)(\tau)=E_4(\tau)^{\varepsilon'}E_6(\tau)^{\delta'}\Delta(\tau)^{m'}P_{\ser_k(f)}(j(\tau)),
\]
where $(\varepsilon',\delta',m')\in \{0,1,2\}\times\{0,1\}\times\Z$ with $k+2=4\varepsilon'+6\delta'+12m'$.

Recall that, in $\F_1$, the Eisenstein series $E_4(\tau)$ and $E_6(\tau)$ have their unique zeros at $\rho_1$ and $i$, respectively, and that these zeros are simple.
Moreover, $\Delta(\tau)$ has no zeros in $\uH$. 
Thus, it suffices to prove the following reformulation of \Cref{thm:main} for $p=1$. 

\begin{theorem}\label{thm:main2}
Under the same assumptions as in \Cref{thm:main} with $p=1$, all the roots of $P_{\ser_k(f)}$ lie in the closed interval $[0,1728]$.
\end{theorem}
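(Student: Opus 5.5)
The plan is to translate the Serre derivative into a differential operator on the polynomial $P_f$ and then argue with Rolle's theorem together with degree counting, using the standard Ramanujan identities
\[
\ser_4(E_4)=-\tfrac13E_6,\qquad \ser_6(E_6)=-\tfrac12E_4^2,\qquad \ser_{12}(\Delta)=0,\qquad D(j)=-\frac{E_6}{E_4}j.
\]

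\textbf{Computing the transformed polynomial.} Since $\ser$ obeys the Leibniz rule with respect to weights, we get
\[
\ser_k(f)=E_4^\varepsilon E_6^\delta\Delta^m\Bigl(\bigl(-\tfrac{\varepsilon}{3}\tfrac{E_6}{E_4}-\tfrac{\delta}{2}\tfrac{E_4^2}{E_6}\bigr)P_f(j)-\tfrac{E_6}{E_4}\,jP_f'(j)\Bigr).
\]
We then rewrite this using $E_4^3=j\Delta$ and $E_6^2=(j-1728)\Delta$. The six pairs $(\varepsilon,\delta)$ each lead to a case. In each case $P_{\ser_k(f)}(X)$ equals, up to a nonzero constant and possibly an extra factor $X$ or $X-1728$ absorbed from the $E_4,E_6$ bookkeeping, an expression of the form
\[
Q(X)=a\,X(X-1728)P_f'(X)+\bigl(bX+c\bigr)P_f(X),
\]
with explicit constants. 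For instance, when $\varepsilon=\delta=0$ the expression is simply $-XP_f'(X)$, up to the factor $E_6/E_4$, which becomes $E_4^2E_6/\Delta$ and so shifts $(\varepsilon',\delta',m')$ to $(2,1,m-1)$. A cleaner way to organize this is to note that $Q$ is, up to sign, a power of $X$ and of $X-1728$ times the derivative of $X^{\varepsilon/3}(X-1728)^{\delta/2}P_f(X)$, or of a similar weighted product.

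\textbf{Locating the roots.} By hypothesis, all roots of $P_f$ lie in $[0,1728]$ and are real. Write $h(X)=X^{s}(X-1728)^{t}P_f(X)$ with $s,t\ge0$ rational exponents, considered on $(0,1728)$. By Rolle's theorem, $h'$ has a root strictly between any two consecutive distinct roots of $P_f$. Moreover, a root of $P_f$ of multiplicity $r$ that lies in $(0,1728)$ is a root of $Q$ of multiplicity $r-1$. If $s>0$ or $t>0$, Rolle also applies between the endpoint $0$ or $1728$ and the extreme root of $P_f$, because $h$ vanishes at that endpoint. Adding these up, we count at least $\deg Q$ roots of $Q$ in $[0,1728]$ whenever the leading coefficient of $Q$ does not cancel. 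Non-cancellation corresponds exactly to \Cref{rem:cusp}, since the order at $i\infty$ is preserved, so $\deg Q$ equals the degree forced by $\ord_{i\infty}$. This count then pins down all roots of $Q$.

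\textbf{Expected obstacle.} The hardest part is the case bookkeeping. We must check that in every case $(\varepsilon,\delta)$ the exponents $s,t$ are positive exactly when an endpoint root is needed, so that the Rolle count matches $\deg P_{\ser_k(f)}$. We must also handle roots of $P_f$ at the endpoints $0$ and $1728$ themselves, where the multiplicity $r$ changes by a fraction via $s,t$. My first step would be to write the general identity
\[
Q\propto X^{1-s}(X-1728)^{1-t}\frac{d}{dX}\Bigl(X^{s}(X-1728)^{t}P_f\Bigr),
\]
with $s=\varepsilon/3$ and $t=\delta/2$, adjusted by the shift in $(\varepsilon',\delta',m')$. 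I would then verify the root count uniformly, treating roots at the endpoints through the local exponents $s+r$ and $t+r$.
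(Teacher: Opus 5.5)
Your proposal is correct, and it takes a genuinely different route from the paper's sketch.

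\textbf{The paper's route.} The paper (Nakaya's argument) localizes the roots through the rational function $R_f=-Q_f/(XP_f)=\frac{1}{3X}+\sum_{w}\frac{\ord_w P_f}{X-w}$. Taking imaginary parts shows that every root of $Q_f$ outside $S_{P_f}$ is real. The sign of $R_f$ on $(-\infty,0)$ and on $(1728,\infty)$ then places that root in $[0,1728]$. This is a Gauss--Lucas type argument and involves no counting: the degree of $P_{\ser_k(f)}$, the survival of its leading coefficient, and multiplicities at $0$ and $1728$ never enter.

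\textbf{Your route.} You use the identity
\[
-(X-1728)Q_f=X^{1-s}(X-1728)^{1-t}\frac{d}{dX}\bigl(X^{s}(X-1728)^{t}P_f\bigr),\qquad s=\tfrac{\varepsilon}{3},\ t=\tfrac{\delta}{2},
\]
which is correct, and then apply Rolle on $[0,1728]$ together with an exact degree count. This is the polynomial analogue of the paper's main proof in \Cref{sec:proof}. Rolle replaces the intermediate value theorem on the arc, and $\deg P_{\ser_k(f)}$ replaces the valence formula. Your version treats all six cases $(\varepsilon,\delta)$ uniformly, whereas the paper writes out only $(1,0)$. It also gives interlacing of the new roots with the roots of $P_f$ and with the endpoints $0$ and $1728$.

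\textbf{The bookkeeping closes.} The price of your route is the bookkeeping you flagged, and it does close; here is the tally.
\begin{enumerate}
\item Put $\tilde Q:=X(X-1728)P_f'+(s(X-1728)+tX)P_f=-(X-1728)Q_f$. Then $P_{\ser_k(f)}=-\tilde Q/\bigl(X^{a}(X-1728)^{b}\bigr)$, where $a=1$ iff $s=0$ and $b=1$ iff $t=0$. The reason is that $E_4^{-1}=E_4^2/(j\Delta)$ must be used exactly when $\varepsilon=0$, and $E_6^2=(j-1728)\Delta$ absorbs the denominator of $Q_f$ exactly when $\delta=1$.
\item The leading coefficient of $\tilde Q$ is $n+s+t$ times that of $P_f$, where $n=\deg P_f$. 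We have $n+s+t>0$, since $n=s=t=0$ would give $f=c\Delta^m$, which has no zeros. Hence $\deg P_{\ser_k(f)}=n-1+(1-a)+(1-b)$.
\item Suppose $P_f$ has a root of multiplicity $r$ at $0$, and write $P_f=X^ru$ with $u(0)\neq0$. Then $\tilde Q=X^r\bigl(-1728(r+s)u(0)+O(X)\bigr)$, so this is a root of $P_{\ser_k(f)}$ of multiplicity exactly $r-a$. The same holds at $1728$ with $b$ in place of $a$.
\item Add the $|Z|-1$ Rolle roots, where $Z$ is the zero set of $h$ in $[0,1728]$. The total matches $\deg P_{\ser_k(f)}$ in every case.
\end{enumerate}
One small adjustment: use $(1728-X)^{t}$ in $h$, so that $h$ is real-valued on $[0,1728]$ and Rolle applies as stated.
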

\begin{proof}[Sketch of Proof]
By Ramanujan's differential equations, we have
\[
\ser_4(E_4)=-\frac{1}{3}E_6,\qquad\ser_6(E_6)=-\frac{1}{2}E_4^2,\qquad\ser_{12}(\Delta)=0.
\]
Using these identities, a straightforward calculation gives
\[
\ser_{k}(f)(\tau)=E_{4}(\tau)^\varepsilon E_6(\tau)^\delta \frac{E_6(\tau)}{E_4(\tau)}\Delta(\tau)^{m}Q_f(j(\tau)),
\]
where
\[
\begin{split}
    Q_f(X)&:=c_{\varepsilon,\delta}(X)P_f(X)-X \frac{d P_{f}}{dX}(X),\\
    c_{\varepsilon,\delta}(X)&:=-\frac{\varepsilon}{3}-\frac{\delta}{2}\frac{X}{X-1728}.
\end{split}
\]

Since the essence of the proof is the same in all cases, we only consider the case $(\varepsilon,\delta)=(1,0)$.
In this case, we have $(\varepsilon',\delta')=(0,1)$ and
\[
P_{\ser_k(f)}(X)=Q_f(X)=-\frac{1}{3}P_f(X)-X\frac{dP_f}{dX}(X).
\]
By the definition of $Q_f(X)$, if $0\in S_{Q_f}$, then $0\in S_{P_f}$, where $S_P$ denotes the set of roots of $P(X)\in\C[X]$.
By the assumptions of \Cref{thm:main}, we have $S_{P_f}\subset[0,1728]$, in particular $S_{Q_f}\cap S_{P_f}\subset [0,1728]$.

We now define the rational function $R_f(X)$ by
\[
R_f(X):=-\frac{Q_f(X)}{X P_f(X)}=\frac{1}{3X}+\frac{d\log P_f}{dX}(X)=\frac{1}{3X}+\sum_{z\in S_{P_f}}\frac{\ord_{z}P_f}{X-z}.
\]
Let $z=a+bi\in S_{Q_f}\setminus S_{P_f}$, where $a,b\in\mathbb{R}$.
Then we have $R_f(z)=0$.
Taking imaginary parts of $R_f(z)$, we have
\[
\Im(R_f(z) )=-b\left(\frac{1}{3\abs{z}^2}+\sum_{w\in S_{P_f}}\frac{\ord_{w}P_f}{\abs{z-w}^2}\right)=0,
\]
which implies $b=0$. Thus $z\in\mathbb{R}$. 

If $z<0$, then $z-w<0$ for all $w\in S_{P_f}$, and hence
\[
R_f(z)=\frac{1}{3z}+\sum_{w\in S_{P_f}}\frac{\ord_{w}P_f}{z-w}<0,
\]
which contradicts $R_f(z)=0$. Similarly, if $z>1728$, then $z-w>0$ for all $w\in S_{P_f}$, and hence $R_f(z)>0$, again contradicting $R_f(z)=0$. This shows that $z\in[0,1728]$.
Since $S_{Q_f}\cap S_{P_f}\subset[0,1728]$, it follows that $S_{Q_f}\subset[0,1728]$.

The remaining cases are treated in the same way as the case $(\varepsilon,\delta)=(1,0)$, with only slight changes. Indeed, using
\[
\frac{E_6(\tau)^2}{E_4(\tau)^3}=\frac{j(\tau)-1728}{j(\tau)},
\]
one obtains $P_{\ser_k(f)}$ explicitly by a straightforward calculation. We therefore omit the details.
\end{proof}

\begin{remark}
With a slight modification, this proof can also be applied to the cases $p=2,3,5,7$.
\end{remark}

\section*{Acknowledgements}
The author is grateful to Dr.~Akihiro Goto for suggesting this research topic.
The author also thanks Prof.~Masanobu Kaneko, Dr.~Yuichi Sakai, Dr.~Tomoaki Nakaya, and Dr.~Toshiteru Kinjo for their helpful comments and suggestions on earlier versions of this paper.
The author is particularly indebted to Dr.~Tomoaki Nakaya for providing an alternative proof of \Cref{thm:main} with $p=1$, which is included in \Cref{sec:alt}.


\begin{bibdiv}
\begin{biblist}

\bib{ACN16}{article}{
      author={Aricheta, Victor~Manuel},
      author={Celeste, Richell},
      author={Nemenzo, Fidel},
       title={Hecke operators and zeros of modular forms},
        date={2016},
        ISSN={0126-6705,2180-4206},
     journal={Bull. Malays. Math. Sci. Soc.},
      volume={39},
      number={3},
       pages={1249\ndash 1257},
         url={https://doi.org/10.1007/s40840-016-0333-3},
      review={\MR{3515078}},
}

\bib{AKN97}{article}{
      author={Asai, Tetsuya},
      author={Kaneko, Masanobu},
      author={Ninomiya, Hirohito},
       title={Zeros of certain modular functions and an application},
        date={1997},
        ISSN={0010-258X},
     journal={Comment. Math. Univ. St. Paul.},
      volume={46},
      number={1},
       pages={93\ndash 101},
      review={\MR{1448475}},
}

\bib{BKM08}{article}{
      author={Bannai, Eiichi},
      author={Kojima, Koji},
      author={Miezaki, Tsuyoshi},
       title={On the zeros of {H}ecke-type {F}aber polynomials},
        date={2008},
        ISSN={1340-6116,1883-2032},
     journal={Kyushu J. Math.},
      volume={62},
      number={1},
       pages={15\ndash 61},
         url={https://doi.org/10.2206/kyushujm.62.15},
      review={\MR{2413781}},
}

\bib{CI16}{article}{
      author={Choi, SoYoung},
      author={Im, Bo-Hae},
       title={On the zeros of certain weakly holomorphic modular forms for
  {$\Gamma_0^+(2)$}},
        date={2016},
        ISSN={0022-314X,1096-1658},
     journal={J. Number Theory},
      volume={166},
       pages={298\ndash 323},
         url={https://doi.org/10.1016/j.jnt.2016.02.008},
      review={\MR{3486279}},
}

\bib{CK14}{article}{
      author={Choi, SoYoung},
      author={Kim, Chang~Heon},
       title={Valence formulas for certain arithmetic groups and their
  applications},
        date={2014},
        ISSN={0022-247X,1096-0813},
     journal={J. Math. Anal. Appl.},
      volume={420},
      number={1},
       pages={447\ndash 463},
      review={\MR{3229834}},
}

\bib{DJ08}{article}{
      author={Duke, W.},
      author={Jenkins, Paul},
       title={On the zeros and coefficients of certain weakly holomorphic
  modular forms},
        date={2008},
        ISSN={1558-8599,1558-8602},
     journal={Pure Appl. Math. Q.},
      volume={4},
      number={4},
       pages={1327\ndash 1340},
      review={\MR{2441704}},
}

\bib{Get04}{article}{
      author={Getz, Jayce},
       title={A generalization of a theorem of {R}ankin and {S}winnerton-{D}yer
  on zeros of modular forms},
        date={2004},
        ISSN={0002-9939,1088-6826},
     journal={Proc. Amer. Math. Soc.},
      volume={132},
      number={8},
       pages={2221\ndash 2231},
      review={\MR{2052397}},
}

\bib{Gun06}{article}{
      author={Gun, Sanoli},
       title={On the zeros of certain cusp forms},
        date={2006},
        ISSN={0305-0041,1469-8064},
     journal={Math. Proc. Cambridge Philos. Soc.},
      volume={141},
      number={2},
       pages={191\ndash 195},
         url={https://doi.org/10.1017/S0305004106009522},
      review={\MR{2265867}},
}

\bib{GO22}{article}{
      author={Gun, Sanoli},
      author={Oesterl\'e, Joseph},
       title={Critical points of {E}isenstein series},
        date={2022},
        ISSN={0025-5793,2041-7942},
     journal={Mathematika},
      volume={68},
      number={1},
       pages={259\ndash 298},
      review={\MR{4405978}},
}

\bib{Han23}{article}{
      author={Hanamoto, Seiichi},
       title={Zeros of certain weakly holomorphic modular forms},
        date={2023},
        ISSN={1340-6116,1883-2032},
     journal={Kyushu J. Math.},
      volume={77},
      number={2},
       pages={255\ndash 270},
         url={https://doi.org/10.2206/kyushujm.77.255},
      review={\MR{4665180}},
}

\bib{HK21}{article}{
      author={Hanamoto, Seiichi},
      author={Kuga, Seiji},
       title={Zeros of certain weakly holomorphic modular forms for the
  {F}ricke group {$\Gamma _0^+(3)$}},
        date={2021},
        ISSN={0065-1036,1730-6264},
     journal={Acta Arith.},
      volume={197},
      number={1},
       pages={37\ndash 54},
         url={https://doi.org/10.4064/aa190509-7-2},
      review={\MR{4185914}},
}

\bib{KZ98}{incollection}{
      author={Kaneko, M.},
      author={Zagier, D.},
       title={Supersingular {$j$}-invariants, hypergeometric series, and
  {A}tkin's orthogonal polynomials},
        date={1998},
   booktitle={Computational perspectives on number theory ({C}hicago, {IL},
  1995)},
      series={AMS/IP Stud. Adv. Math.},
      volume={7},
   publisher={Amer. Math. Soc., Providence, RI},
       pages={97\ndash 126},
         url={https://doi.org/10.1090/amsip/007/05},
      review={\MR{1486833}},
}

\bib{Kat92}{book}{
      author={Katok, Svetlana},
       title={Fuchsian groups},
      series={Chicago Lectures in Mathematics},
   publisher={University of Chicago Press, Chicago, IL},
        date={1992},
        ISBN={0-226-42582-7; 0-226-42583-5},
      review={\MR{1177168}},
}

\bib{Kla19}{article}{
      author={Klangwang, Jetjaroen},
       title={Zeros of certain combinations of {E}isenstein series of weight
  {$2k$}, {$3k$}, and {$k+l$}},
        date={2019},
        ISSN={0022-314X,1096-1658},
     journal={J. Number Theory},
      volume={198},
       pages={124\ndash 138},
         url={https://doi.org/10.1016/j.jnt.2018.10.005},
      review={\MR{3912932}},
}

\bib{Kug21}{article}{
      author={Kuga, Seiji},
       title={On the zeros of certain weakly holomorphic modular forms for
  {$\Gamma_0^+(5)$} and {$\Gamma_0^+(7)$}},
        date={2021},
        ISSN={0065-1036,1730-6264},
     journal={Acta Arith.},
      volume={201},
      number={3},
       pages={219\ndash 239},
         url={https://doi.org/10.4064/aa200318-3-4},
      review={\MR{4361573}},
}

\bib{Kug22}{article}{
      author={Kuga, Seiji},
       title={On the distribution of the zeros of {E}isenstein series for
  {$\Gamma^*_0(5)$} and {$\Gamma^*_0(7)$}},
        date={2022},
        ISSN={1382-4090,1572-9303},
     journal={Ramanujan J.},
      volume={58},
      number={4},
       pages={995\ndash 1010},
         url={https://doi.org/10.1007/s11139-022-00557-5},
      review={\MR{4451508}},
}

\bib{MNS07}{article}{
      author={Miezaki, Tsuyoshi},
      author={Nozaki, Hiroshi},
      author={Shigezumi, Junichi},
       title={On the zeros of {E}isenstein series for {$\Gamma^*_0(2)$} and
  {$\Gamma^*_0(3)$}},
        date={2007},
        ISSN={0025-5645,1881-1167},
     journal={J. Math. Soc. Japan},
      volume={59},
      number={3},
       pages={693\ndash 706},
      review={\MR{2344823}},
}

\bib{RSD70}{article}{
      author={Rankin, F. K.~C.},
      author={Swinnerton-Dyer, H. P.~F.},
       title={On the zeros of {E}isenstein series},
        date={1970},
        ISSN={0024-6093,1469-2120},
     journal={Bull. London Math. Soc.},
      volume={2},
       pages={169\ndash 170},
      review={\MR{260674}},
}

\bib{Ran69}{article}{
      author={Rankin, R.~A.},
       title={The zeros of {E}isenstein series},
        date={1969},
        ISSN={0304-9965},
     journal={Publ. Ramanujan Inst.},
      volume={1},
       pages={137\ndash 144},
      review={\MR{269598}},
}

\bib{Ran82}{article}{
      author={Rankin, R.~A.},
       title={The zeros of certain {P}oincar\'e{} series},
        date={1982},
        ISSN={0010-437X,1570-5846},
     journal={Compositio Math.},
      volume={46},
      number={3},
       pages={255\ndash 272},
      review={\MR{664646}},
}

\bib{Rav25}{article}{
      author={Raveh, Roei},
       title={On the zeros of the {M}iller basis of cusp forms},
        date={2025},
        ISSN={2522-0160,2363-9555},
     journal={Res. Number Theory},
      volume={11},
      number={4},
       pages={Paper No. 96, 32},
         url={https://doi.org/10.1007/s40993-025-00679-x},
      review={\MR{4978581}},
}

\bib{RVY17}{article}{
      author={Reitzes, Sarah},
      author={Vulakh, Polina},
      author={Young, Matthew~P.},
       title={Zeros of certain combinations of {E}isenstein series},
        date={2017},
        ISSN={0025-5793,2041-7942},
     journal={Mathematika},
      volume={63},
      number={2},
       pages={666\ndash 695},
         url={https://doi.org/10.1112/S0025579317000110},
      review={\MR{3706602}},
}

\bib{Rud05}{article}{
      author={Rudnick, Ze\'ev},
       title={On the asymptotic distribution of zeros of modular forms},
        date={2005},
        ISSN={1073-7928,1687-0247},
     journal={Int. Math. Res. Not.},
      number={34},
       pages={2059\ndash 2074},
         url={https://doi.org/10.1155/IMRN.2005.2059},
      review={\MR{2181743}},
}

\bib{Shi07}{article}{
      author={Shigezumi, Junichi},
       title={On the zeros of the {E}isenstein series for {$\Gamma^*_0(5)$} and
  {$\Gamma^*_0(7)$}},
        date={2007},
        ISSN={1340-6116,1883-2032},
     journal={Kyushu J. Math.},
      volume={61},
      number={2},
       pages={527\ndash 549},
      review={\MR{2362897}},
}

\bib{Shi10}{article}{
      author={Shigezumi, Junichi},
       title={On the zeros of certain {P}oincar\'e{} series for
  {$\Gamma_0^*(2)$} and {$\Gamma_0^*(3)$}},
        date={2010},
        ISSN={0030-6126},
     journal={Osaka J. Math.},
      volume={47},
      number={2},
       pages={487\ndash 505},
         url={http://projecteuclid.org/euclid.ojm/1277298914},
      review={\MR{2722370}},
}

\bib{Sug26}{article}{
      author={Sugibayashi, Naoki},
       title={Critical points of the {E}isenstein series for the {F}ricke group
  of level 2},
        date={2026},
        ISSN={1793-0421,1793-7310},
     journal={Int. J. Number Theory},
      volume={22},
      number={3},
       pages={489\ndash 518},
         url={https://doi.org/10.1142/S1793042126500296},
      review={\MR{5026411}},
}

\bib{vIR24}{article}{
      author={van Ittersum, Jan-Willem},
      author={Ringeling, Berend},
       title={Critical points of modular forms},
        date={2024},
        ISSN={1793-0421,1793-7310},
     journal={Int. J. Number Theory},
      volume={20},
      number={10},
       pages={2695\ndash 2728},
      review={\MR{4833417}},
}

\bib{Woh63}{article}{
      author={Wohlfahrt, Klaus},
       title={{\"U}ber die {N}ullstellen einiger {E}isensteinreihen},
        date={1963/64},
        ISSN={0025-584X,1522-2616},
     journal={Math. Nachr.},
      volume={26},
       pages={381\ndash 383},
         url={https://doi.org/10.1002/mana.19630260606},
      review={\MR{167470}},
}

\bib{XZ21}{article}{
      author={Xue, Hui},
      author={Zhu, Daozhou},
       title={Location of the zeros of certain cuspforms},
        date={2021},
        ISSN={1793-0421,1793-7310},
     journal={Int. J. Number Theory},
      volume={17},
      number={8},
       pages={1899\ndash 1903},
         url={https://doi.org/10.1142/S1793042121500688},
      review={\MR{4310213}},
}

\bib{XZ25}{article}{
      author={Xue, Hui},
      author={Zhu, Daozhou},
       title={Zeros of cuspidal projections of products of {E}isenstein
  series},
        date={2025},
        ISSN={1382-4090,1572-9303},
     journal={Ramanujan J.},
      volume={68},
      number={4},
       pages={Paper No. 107, 49},
         url={https://doi.org/10.1007/s11139-025-01250-z},
      review={\MR{4989999}},
}

\bib{Zag08}{incollection}{
      author={Zagier, Don},
       title={Elliptic modular forms and their applications},
        date={2008},
   booktitle={The 1-2-3 of modular forms},
      series={Universitext},
   publisher={Springer, Berlin},
       pages={1\ndash 103},
         url={https://doi.org/10.1007/978-3-540-74119-0_1},
      review={\MR{2409678}},
}

\end{biblist}
\end{bibdiv}
\end{document}